\documentclass[12pt,leqno]{amsart}
\usepackage{amsmath,stmaryrd}
\usepackage{amssymb}
\usepackage{amsthm}
\usepackage{epsf}
\usepackage{graphicx}
\usepackage{esint} 
\usepackage{dsfont}
\usepackage{tikz}
\usepackage[pagebackref]{hyperref} 
\hypersetup{pdfpagemode=FullScreen,  colorlinks=true} 
\usepackage{enumerate} 
\usepackage{xcolor,bbm,easybmat,bm}
\usepackage{accents}

\usepackage{amsfonts}
\usepackage{amsrefs}
\usepackage{verbatim}
\usepackage{booktabs}
\usepackage{color}
\usepackage{graphicx,float}
\usepackage{accents}
\newcommand{\dbtilde}[1]{\accentset{\approx}{#1}}

\newcommand{\tripprox}{\setbox0\hbox{$\approx$}%
\mbox{\makebox[0pt][l]{\raisebox{0.48\ht0}{$\approx$}}$\approx$}}
\newcommand{\trtilde}[1]{\accentset{\tripprox}{#1}}

\numberwithin{equation}{section}

\makeatletter
\newcommand\footnoteref[1]{\protected@xdef\@thefnmark{\ref{#1}}\@footnotemark}
\makeatother

\newcommand{\dr}{\partial}

\DeclareMathOperator{\divg}{div}

\DeclareMathOperator{\diam}{diam}

\DeclareMathOperator{\loc}{loc}

\newcommand{\1}{{\mathds 1}}

\newcommand{\R}{\mathbb R}
\newcommand{\N}{\mathbb N}
\newcommand{\cF}{\mathcal F}

\newcommand{\wt}{\widetilde}

\newcommand{\Base}{\mathcal{O}}

\renewcommand{\L}{L}

\newcommand{\C}{\mathcal C}

\newcommand{\Rn}{\mathbb R^n}

\newcommand{\abs}[1]{\left\vert#1\right\vert}
\newcommand{\br}[1]{\left(#1\right)}

\newcommand{\set}[1]{\left\{#1\right\}}
\renewcommand{\div}{\mathrm{div}}
\renewcommand{\d}{\, \mathrm{d}} %differential

\newcommand{\om}{\Omega}
\newcommand{\pom}{\partial\Omega}

\newcommand{\E}{\mathsf{E}} %Energy space
\newcommand{\Lloc}{\L_{\operatorname{loc}}} %local Lebesgue spaces
\newcommand{\HT}{H_t} %Hilbert transform
\newcommand{\dhalf}{D_t^{1/2}} %half time derivative
\newcommand{\Hdot}{\dot{H}\protect{\vphantom{H}}} %homogeneous Sobolev spaces
\newcommand{\mS}{{\mathcal S}} %Schwartz space/Single layer

\newcommand{\pd}{\partial}
\newcommand{\cl}[1]{\overline{#1}} %closure

\newcommand{\ree}{{\mathbb{R}^{n}}}

\newcommand{\lpformath}{\rm{(}}
\newcommand{\rpformath}{\rm{)}}

\newcommand{\Nformath}{\rm{N}}
\newcommand{\Dformath}{\rm{D}}

\newcommand{\Np}{\hyperlink{Np}{$\lpformath\Nformath_{p}\rpformath$}}
\newcommand{\Nq}{\hyperlink{Np}{$\lpformath\Nformath_{q}\rpformath$}}

\newcommand{\Dq}{\hyperlink{Dq}{$\lpformath\Dformath_{p'}\rpformath$}}

\newcommand{\dint}{\int\!\!\!\!\!\int}
\def\Yint#1{\mathchoice
	{\YYint\displaystyle\textstyle{#1}}%
	{\YYint\textstyle\scriptstyle{#1}}%
	{\YYint\scriptstyle\scriptscriptstyle{#1}}%
	{\YYint\scriptscriptstyle\scriptscriptstyle{#1}}%
	\!\dint}
\def\YYint#1#2#3{{\setbox0=\hbox{$#1{#2#3}{\iint}$}
		\vcenter{\hbox{$#2#3$}}\kern-.51\wd0}}
\def\longdash{\mkern-1.5mu{-}\mkern-7.5mu{-}} 
\def\fiint{\Yint\longdash}

\newcommand{\Z}{{\mathbb Z}}

\theoremstyle{plain}
\newtheorem{theorem}[equation]{Theorem}
\newtheorem{lemma}[equation]{Lemma}

\newtheorem{proposition}[equation]{Proposition}

\newtheorem{definition}[equation]{Definition}

\theoremstyle{definition}

\theoremstyle{remark}
\newtheorem{remark}[equation]{Remark}

\begin{document}

\title[Extrapolation of solvability of the parabolic $L^p$ Neumann problem ]{Extrapolation of solvability of the parabolic $L^p$ Neumann problem on bounded Lipschitz cylinders}

\author[Dindo\v{s}]{Martin Dindo\v{s}}
\address{School of Mathematics, 
The University of Edinburgh and Maxwell Institute of Mathematical Sciences, Edinburgh, UK}
\email{M.Dindos@ed.ac.uk}

\author[Liu]{YingYi Liu}
\address{School of Informatics, The University of Edinburgh, Edinburgh, UK}
\email{s2029366@ed.ac.uk}

%\thanks{The first author has been supported in part by EPSRC grant EP/Y033078/1.}

\subjclass[2020]{35K20, 35K10}

\maketitle

\begin{abstract} 
A recent result of the first author with Li and Pipher has established the extrapolation of solvability of the $L^p$ parabolic Neumann problem on unbounded graph domains of the form $\Omega=\{(x',x_n):\,x_n>\phi(x')\}\times\R$,
where $\phi:\R^{n-1}\to\R$ is a Lipschitz function. The result shows that under the assumptions that the $L^p$ parabolic Neumann problem for the equation $Lu=-\partial_t u+\divg(A\nabla u)=0$ in $\Omega$ and also 
the $L^{p'}$ parabolic Dirichlet problem for the adjoint equation $L^*u=\partial_t u+\divg(A^T\nabla u)=0$ in $\Omega$
are solvable, then also the $L^q$ parabolic Neumann problem for the equation $Lu=0$ in $\Omega$
is solvable for all $1<q<p$.

However, the mentioned paper does not answer the question whether the same claim is also true for domains of the form $\mathcal O\times\R$, where $\mathcal O$ is a bounded Lipschitz domain (in spatial variables) since this case does not follow from our argument for the unbounded case. Indeed, the bounded Lipschitz cylinder case requires a significantly different approach which we present in this article and establish an analogous result when $\mathcal O$ is a bounded Lipschitz domain.
\end{abstract}

\tableofcontents

%\ms\noindent{\bf Keywords:} 

%\ms\noindent

\section{Introduction}
The main objective of this paper is to establish extrapolation of solvability for the $L^p$ Neumann problem for the parabolic PDE
$$L u= -\dr_t u +\divg (A \nabla u)=0,\qquad\mbox{in } \Omega,$$
where the domain $\Omega$ is of the form $\mathcal O\times\R$, where $\mathcal O$ is a bounded Lipschitz domain (in spatial variables), i.e., a bounded (in space) Lipschitz cylinder that is infinite in the time variable. 

The coefficients of the elliptic matrix $A$ are assumed to be real, bounded, measurable and are allowed to be time varying. No other assumption of the coefficients is made. More precisely, we assume that $A= [a_{ij}(X, t)]$ is an $n\times n$ bounded matrix satisfying the following uniform ellipticity condition: there exist positive constants $\lambda$ and $\Lambda$ such that
\begin{equation}
	\label{E:elliptic}
	\lambda |\xi|^2 \leq \sum_{i,j} a_{ij}(X,t) \xi_i \xi_j,\qquad |A(X,t)| \leq \Lambda,
\end{equation}
for almost every $(X,t) \in \Omega$ and all $\xi \in \R^n$.\medskip

We prove that if for some $p\in(1,\infty)$ the $L^p$ Neumann problem for the PDE $Lu=0$ in $\Omega$
is solvable (and also the $L^{p'}$ Dirichlet problem for the adjoint PDE $L^*u=\dr_t u +\divg (A^T \nabla u)=0$ in $\Omega$ is solvable), then the $L^q$ Neumann problem for the PDE $Lu=0$ in $\Omega$ is solvable for all $1<q<p$. We note that $L^*$ is a time-backward operator, but as we impose boundary conditions on the lateral boundary $\partial\mathcal O\times\R$ this causes no issues, and the change of variables $\tau\mapsto -t$ gets us back to a standard time-forward parabolic operator.  
\medskip

The importance of this result is that it is \lq\lq clean" and does not impose any additional assumptions on the coefficients beyond the natural ones (ellipticity and boundedness). Results of this nature are known for the parabolic Dirichlet problems (either using known properties of $B_p$ weights or alternatively by interpolation against the $L^\infty$ end-point which is a consequence of the maximum principle) and also recently for the Regularity problem \cite{DiS} using an interpolation against a parabolic Hardy-Sobolev endpoint $p=1$. An analogous elliptic result for the Neumann problem is known and was first proven in \cite{KP93} under stronger assumptions (where the related Regularity problem must also be solvable). This was later relaxed in \cite{FL} to the solvability of the $L^{p'}$ Dirichlet problem for the adjoint PDE. Our claim only uses this weaker assumption. This assumption seems necessary, since even in the elliptic case there are no known results without this condition except on special domains, but even in these exceptional cases, the assumption is usually not \lq\lq violated"—rather, it is either structurally guaranteed (so it doesn't need to be posed as an additional hypothesis) or is bypassed entirely via different analytic machinery. 

Hence our result closes the gap in our understanding of parabolic boundary value problems and brings it to the same state of the art as in the elliptic case.
\medskip

The paper \cite{DLP3} has proven this result for unbounded graph domains of the form 
\begin{equation}\label{domain}
\Omega=\{(x',x_n):\,x_n>\phi(x')\}\times\R
\end{equation}
however, the case of bounded Lipschitz cylinders we consider here is significantly different and requires substantially new ideas. As we shall see, at one point we introduce an unbounded domain of the type \eqref{domain} and solve an auxiliary Neumann problem associated to it. We will have to overcome a significant obstacle there since we do not assume Neumann solvability on the domain \eqref{domain} but only on the original bounded
domain. We explain in detail why we introduce this intermediate domain in subsection \ref{ssUD}.
\medskip

We note that for specific operators (such as the heat equation on Lipschitz cylinders, or even more general domains), this result is known (cf.\ \cite{B}).  We also note that in the recent preprint \cite{DLP2}, the solvability of the $L^p$ Neumann problem 
in the range $1<p<p_0$ for some $p_0>1$ has been established on bounded Lipschitz cylinders, i.e., domains we consider here, but with additional assumptions, namely the coefficients $A$ satisfying a small Carleson measure condition and also smallness of the Lipschitz norm of the domain. Hence, the result presented here is of a different nature, as we make no smallness assumptions on the domain and no further assumptions on the coefficients.
 
 \begin{theorem}\label{MT} Let $\Omega=\mathcal O\times\R$, where 
$\mathcal O\subset\R^n$ is a bounded Lipschitz domain (as defined by Definition \ref{DefLipDomain}).
Consider the PDE 
$$L u= -\dr_t u +\divg (A \nabla u)=0   \quad\text{in } \Omega,$$ 
such that \eqref{E:elliptic} holds for a.e. $(X,t)\in \Omega$.
Assume that for some $p\in(1,\infty)$ the $L^p$ Neumann problem \Np$^L$  and the $L^{p'}$ Dirichlet problem  \Dq$^{L^*}$ for the adjoint PDE $L^*u=\dr_t u +\divg (A^T \nabla u)=0$ are both solvable (as defined in Section \ref{S2}).  
 
 Then for all $1<q<p$ the $L^q$ Neumann problem \Nq$^L$ is also solvable. Furthermore for $p=1$
 the problem is solvable for Neumann data in the atomic Hardy space $\hbar^1_{ato}(\partial\Omega)$.
\end{theorem}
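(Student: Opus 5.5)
The plan follows the classical Kenig--Pipher/Feng--Lin scheme: prove the Hardy-space endpoint, then interpolate. \textbf{Step 1} is the endpoint $p=1$. We show that for every parabolic $\hbar^1$-atom $a$ supported on a surface ball $\Delta=\Delta_r(Q,\tau)\subset\partial\Omega$ with $\|a\|_{L^\infty}\le |\Delta|^{-1}$ and $\int a=0$, the energy solution $u$ of $Lu=0$ with $A\nabla u\cdot\nu=a$ satisfies $\|\tilde N(\nabla u)\|_{L^1(\partial\Omega)}\le C$ uniformly in $a$. For large scales, $r\gtrsim\operatorname{diam}\mathcal O$, the atom is (after normalisation) an $L^p$ function on a set of bounded parabolic measure in $x$ but possibly long in $t$. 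Here Hölder's inequality together with the assumed \Np$^L$ bound on the set $\mathcal O\times(\tau-Cr^2,\tau+Cr^2)$ handles the region near the support. The far region is handled by the decay argument below, which is also the argument for small atoms.

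\textbf{Step 2} treats the local part of a small atom. On the enlarged ball $2^k\Delta$ with $k\le k_0$, Hölder's inequality and \Np$^L$ give
\[
\int_{8\Delta}\tilde N(\nabla u)\le |8\Delta|^{1-1/p}\|a\|_{L^p}\le C .
\]
\textbf{Step 3} is the decay estimate away from the support: we want $\int_{2^{k+1}\Delta\setminus 2^k\Delta}\tilde N(\nabla u)\lesssim 2^{-k\alpha}$. Following \cite{FL} and \cite{DLP3}, we control $\tilde N(\nabla u)$ on a dyadic annulus by the $L^p$ Neumann estimate localised to a boundary region. The localisation uses Caccioppoli's inequality and boundary Poincaré for the parabolic Neumann problem, which reduces the annulus contribution to $2^{-k}$-scaled averages of $|u-c|$ on a Whitney-type region. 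Those averages are in turn estimated by duality: pair $u$ against solutions $v$ of the adjoint \Dq$^{L^*}$ problem, so that $u(X,t)-c=\int_{\partial\Omega} a\,(v-v(Q,\tau))$. Since $\int a=0$, boundary Hölder continuity of adjoint solutions vanishing near $\Delta$ then gives the gain $2^{-k\alpha}$. The assumption \Dq$^{L^*}$ enters precisely here, to bound the adjoint Green's function/Poisson kernel, via its reverse Hölder property, when we estimate $\nabla v$ or the conormal pairing. Summing over $k$ gives the uniform $L^1$ bound. As the abstract warns, near the bottom (time) direction we can run this on $\Omega$ directly because the lateral boundary is the only boundary.

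\textbf{Step 4} is interpolation and the main obstacle. The map $f\mapsto \tilde N(\nabla u_f)$ is sublinear and bounded $\hbar^1_{ato}\to L^1$ and $L^p\to L^p$, so interpolation between $\hbar^1$ and $L^p$ yields $L^q$ bounds for $1<q<p$; uniqueness then follows by the standard argument with \Dq$^{L^*}$. The hard part is that on a bounded cylinder, atoms of scale $r\gg\operatorname{diam}\mathcal O$ are elongated in time, and the mean-zero condition is over $\partial\mathcal O\times I$, not over a graph-like ball. The decay in $t$ must therefore come from parabolic decay of the time-far part rather than from the spatial geometry. I would handle this by extending the problem to an auxiliary unbounded graph domain of the type \eqref{domain} that agrees with $\Omega$ near one coordinate patch, and solving the auxiliary Neumann problem there. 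Its solvability is not assumed, so I would first try to construct it by a partition of unity from the bounded-domain solution, with error terms controlled by energy estimates and exponential decay in time. This transfer to the auxiliary domain is the step I expect to be most delicate.
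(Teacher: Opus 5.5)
Your proposal has a genuine gap in Step 3, the source of decay of $u-c$ away from a small atom. The identity $u(X,t)-c=\int_{\pom}a\,(v-v(Q,\tau))$ holds only when $v$ is the adjoint \emph{Neumann} function with pole at $(X,t)$, i.e.\ a solution with vanishing conormal derivative near $\Delta$. Pairing the Neumann datum $a$ against solutions of the adjoint Dirichlet problem does not produce this identity; if $v$ vanishes near $\Delta$, the right-hand side is simply $0$. So what your argument really needs is the size bound $|v|\lesssim R^{-n}$ and boundary H\"older continuity for the parabolic Neumann function of a bounded Lipschitz cylinder with bounded measurable, time-dependent coefficients. You do not prove these, and the $L^{p'}$ adjoint Dirichlet hypothesis does not give them. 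In the paper, as in \cite{FL,DLP3}, that hypothesis enters only through the localisation Theorem \ref{thm.NtoLoc}. The paper says explicitly that the Green-function comparison one uses for Dirichlet problems is unavailable for Neumann functions, and for that reason it never estimates $u$ pointwise. For a small atom it instead solves the Neumann problem for an auxiliary $w$ on a graph domain $\tilde\Omega$ that coincides with $\Omega$ on one coordinate patch. There, even reflection across the \emph{global} graph, the maximum principle, and Aronson's bound for the fundamental solution on $\R^{n+1}$ give Gaussian decay of $\tilde w$. The condition $\int g=0$ then upgrades this to the polynomial decay $\|(X,t)\|^{-n-\gamma}$, which feeds Theorem \ref{thm.NtoLoc} on dyadic annuli.

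Your auxiliary-domain step is misplaced and lacks its key idea. The function $w$ exists by Lax--Milgram in $\dot\E(\tilde\Omega)$, so no partition-of-unity construction from $u$ is needed. The actual difficulty is estimating $\wt N(\nabla w)$ without $L^p$ Neumann solvability on $\tilde\Omega$. The paper handles it in two ways.
\begin{itemize}
\item It applies Theorem \ref{thm.NtoLoc} to $u-w$, which has zero Neumann data wherever the two problems coincide.
\item It uses an absorption argument. The atom is split in time into pieces supported in windows of length $r^2/K$. Since $w\equiv0$ before the window, the equation gives $\|w\|_{L^2}\lesssim(\varepsilon K)^{-1}\|\nabla w\|_{L^2}$ on regions at distance $\varepsilon$ from the boundary. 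The boundary layer costs only $\varepsilon\|\wt N\|_{L^1}$. Fixing $\varepsilon$ small and then $K$ large absorbs the unknown term, and the same absorption is repeated for $u$.
\end{itemize}

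Large atoms are the easy case, not the hard one. On a bounded cylinder, decay in time comes from the spectral gap. Once the Neumann data vanish, $\int_{\mathcal O}u(\cdot,t)$ is conserved, and Poincar\'e on $\mathcal O$ plus Gronwall give $\|u(\cdot,t)-c\|_{L^2(\mathcal O)}\lesssim e^{-\beta t}$ (Proposition \ref{prop:Smooth Exponential Decay}). Moser's estimate and Theorem \ref{thm.NtoLoc} turn this into $e^{-\alpha k}$ decay of $\wt N(\nabla u)$ on unit time slabs. Your dyadic annuli with a H\"older gain $2^{-k\alpha}$ do not exist beyond the scale $\diam\mathcal O$. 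A graph domain cannot supply exponential time decay either: decay there is only polynomial, and it agrees with $\Omega$ only locally. The same exponential mechanism is also what finishes the small-atom case for $t\ge 2/K$.
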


\noindent{\it Remark.} Theorem \ref{MT} is a \emph{downward} extrapolation statement: from solvability at the fixed exponent $p$ it produces solvability for all smaller $1<q<p$, the mechanism being the atomic $p=1$ endpoint together with real interpolation. The complementary \emph{self-improvement} question --- whether \Np$^L$ also implies solvability of the $L^{p+\varepsilon}$ Neumann problem for some $\varepsilon>0$ --- is of a genuinely different (real-variable) nature and is not addressed by our argument. Such a self-improvement would follow from a reverse-H\"older/Gehring-type improvement of the non-tangential maximal function estimate (together with the corresponding self-improvement of the adjoint Dirichlet hypothesis \Dq$^{L^*}$), as is known for the elliptic Neumann problem \cite{KP93,FL} and for the parabolic Dirichlet and Regularity problems \cite{DiS}. We expect the analogous statement to hold in the present setting, but as our hypotheses are posed at a fixed $p$ and the downward conclusion already covers the entire range $1<q<p$, we do not pursue this here.

 \section{Definitions}\label{S2}

 \begin{definition}
A parabolic cube on $\R^n\times\R$  centered at $(X,t)$ with sidelength $r$ is defined as
$$    Q_r(X,t):=\{ (Y, s) \in \R^{n}\times\R : |x_i - y_i| < r \ \text{ for } 1 \leq i \leq n, \ | t - s |^{1/2} < r \},
$$
and we write $\ell(Q_r)=r$. Thus $Q_r(X,t)=Q_r(X)\times(t-r^2,t+r^2)$, where $Q_r(X):=\{Y\in\R^n:\,|x_i-y_i|<r\ \text{for } 1\le i\le n\}$ is the (open) spatial cube centered at $X$ with edges parallel to the coordinate axes and edge-length $2r$. Following the customary usage in the parabolic literature we refer to $Q_r(X,t)$ as a \emph{parabolic cube}, even though with respect to the parabolic metric it is a cylinder (the product of the spatial cube $Q_r(X)$ with a time interval).

A parabolic ball on $\R^n\times\R$  centered at $(X,t)$ with radius $r$ is the ball
\begin{equation}\label{eqdef.ball}
    B_r(X,t):=\{ (Y, s) \in \R^{n}\times\R : d_p((X,t),(Y,s))<r \},
\end{equation}
where $d_p(\cdot,\cdot)$ is the parabolic distance function
\[
d_p((X,t),(Y,s)) := \|(X-Y,t-s)\|,\]
where
 \[ \|(X,t)\|:=\br{\abs{X}^2+\abs{t}}^{1/2}.\]
%\sim \|(X-Y,t-s)\|.\]
For parabolic balls at the boundary we use notation $\Delta_r(X,t)=B_r(X,t)\cap \pom$. 

Finally, by $J_r(X,t)$ we denote the backward parabolic cylinder
$$J_r(X,t)=Q_r(X)\times (t-r^2,t)=Q_r(X,t)\cap\{s<t\},$$
the past half of the parabolic cube $Q_r(X,t)$.
\end{definition}

\begin{definition}
$\Z \subset \R^n$ is an $\ell$-cylinder of diameter $d$ if there
exists an orthogonal coordinate system $(x',x_n)$  with $x'\in\mathbb R^{n-1}$ and $x_n\in\mathbb R$ such that
\[
\Z = \{ (x',x_n)\; : \; |x'|\leq d, \; -(\ell+1) d \leq x_n \leq (\ell+1) d \}
\]
and for $s>0$,
\[
s\Z:=\{(x',x_n)\;:\; |x'|\le sd, -(\ell +1)s d \leq x_n \leq (\ell +1)s d \}.
\]
We do allow for the option that $d=\infty$ and in this case $s\Z=\Z=\R^n$.
\end{definition}

\begin{definition}\label{DefLipDomain}
$\mathcal O\subset \R^n$ is a Lipschitz domain with Lipschitz
`character' $(\ell,N,C_0)$ if there exists a positive scale $r_0\in (0,
\infty]$ and
at most $N$ $\ell$-cylinders $\{{\Z}_j\}_{j=1}^N$ of diameter $d$, with
$\frac{r_0}{C_0}\leq d \leq C_0 r_0$ such that 
\vglue2mm

\noindent (i) $8{\Z}_j \cap {\partial\mathcal O}$ is the graph of a Lipschitz
function $\phi_j$, $\|\nabla\phi_j \|_\infty \leq \ell \, ;
\phi_j(0)=0$,\vglue2mm

\noindent (ii) $\displaystyle {\partial\mathcal O}=\bigcup_j ({\Z}_j \cap {\partial\mathcal O}
)$,

\noindent (iii) $\displaystyle{\Z}_j \cap \mathcal O \supset \left\{
(x',x_n)\in\mathcal O \; : \; |x'|<d, \; \mathrm{dist}\left( (x',x_n),{\partial\mathcal O}
\right) \leq \frac{d}{2}\right\}$, where the right-hand side is written in the coordinate
system associated with $\Z_j$.

\noindent (iv) Each cylinder $\displaystyle{\Z}_j$ contains points from $\mathcal O^c={\mathbb R^n}\setminus\mathcal O$.

\noindent (v) If $r_0<\infty$ the domain $\mathcal O$ is a bounded set.
\vglue1mm
\end{definition}

\noindent{\it Remark.} If the scale $r_0$ is finite, then the domain $\mathcal O$ from the definition above is a bounded Lipschitz domain. The set $\mathcal O\times\R$ will be called a parabolic Lipschitz cylinder with Lipschitz base $\mathcal O$.

If we allow both $r_0,\, d$ to be infinite, then since $\displaystyle{\Z}=\mathbb R^n$,
we have that  $\mathcal O$ can be written in some coordinate system as
$$\mathcal O = \{(x',x_n): x_n > \phi(x')\}\quad\mbox{ where $ \phi:\mathbb R^{n-1} \rightarrow \mathbb R$ is a Lipschitz function.}$$
This is the case considered in \cite{DLP3} and therefore from now on we shall always assume that $0<d<\infty$.

\begin{definition}
For $a>0$ and $(q,\tau)\in\pom$, unless otherwise defined, we denote the non-tangential parabolic cones by
\begin{equation}\label{Gamma2.11}
    \Gamma_a(q,\tau):=\set{(X,t)\in\om: d_p((X,t),(q,\tau))<(1+a)\delta(X,t)},
\end{equation}
and $\delta(\cdot)$ is the parabolic distance to the boundary: 
\[
\delta(X, t) = \inf_{(q,\tau)\in\pom}
d_p((X, t),(q, \tau)).\]
\end{definition}

\begin{definition}
For $w\in L^\infty_{\loc}(\om)$, we define the non-tangential maximal function of $w$ as
\[
 N_a(w)(q,\tau):=\sup_{(X,t)\in\Gamma_a(q,\tau)}\abs{w(X,t)} \quad\text{for }(q,\tau)\in\pom.
\]
If $w\in L^2_{\loc}(\om)$, we define  the modified non-tangential maximal function
\begin{equation}\label{def.Nap}
    \wt N_{a}(w)(q,\tau):=\sup_{(X,t)\in\Gamma_a(q,\tau)}\br{\fiint_{B_{\delta(X,t)/2}(X,t)}\abs{w(Y,s)}^2dYds}^{1/2} \quad\text{for }(q,\tau)\in\pom.
\end{equation}
\end{definition}

\begin{remark}\label{rem.normalN}
When $\mathcal O$ is bounded, in Section \ref{S5} we shall use the following normalization of $\wt N$: the aperture satisfies $a\le \frac12$, and the averages in \eqref{def.Nap} are taken over the balls $B_{\eta\,\delta(X,t)}(X,t)$ in place of $B_{\delta(X,t)/2}(X,t)$, where $a$ and $\eta$ are chosen small depending only on $\diam(\mathcal O)$ so that for every $(q,\tau)\in\pom$ the value of $\wt N(w)(q,\tau)$ depends only on $w$ restricted to $\mathcal O\times(\tau-1,\tau+1)$. This is possible: if $(Y,s)\in\Gamma_a(q,\tau)$ then $\delta(Y,s)\le |Y-q|$ (as $(q,\tau)\in\pom$), so
$$|s-\tau|<(1+a)^2\delta(Y,s)^2-|Y-q|^2\le \big((1+a)^2-1\big)|Y-q|^2\le 3a\diam(\mathcal O)^2,$$
while the averaging balls add at most $\eta^2\diam(\mathcal O)^2$ to the time reach; it thus suffices to take $3a\diam(\mathcal O)^2\le\frac12$ and $\eta^2\diam(\mathcal O)^2\le\frac12$. Since the $L^p(\pom)$ norms of $\wt N$ corresponding to different apertures and different fractions in the averaging balls are comparable
(by a standard level-set argument; see e.g.\ \cite[Chapter~1]{Ke94}), with constants depending only on the Lipschitz character of the domain, all the estimates in Section \ref{S5} are unaffected by this normalization.
\end{remark}

\subsection{Energy solutions}\label{RwEs}

We recall the papers \cite{AEN,Din23} where these concepts are discussed in greater detail. With $\Omega=\mathcal O\times\mathbb R$, where $\mathcal O$ is either a bounded or an unbounded Lipschitz domain,
 we say that $v:\Omega\to\R$ belongs to the \emph{energy class} $\dot \E(\mathcal O\times\mathbb R)$ if
\begin{align*}
 \|v\|_{\dot \E} := \bigg(\|\nabla v\|_{\L^2(\mathcal O\times\mathbb R)}^2 + \|\HT \dhalf v\|_{\L^2(\mathcal O\times\mathbb R)}^2 \bigg)^{1/2} < \infty.
\end{align*}
Consequently, these are called \emph{energy solutions}. When considered modulo constants, $\dot \E $ is a Hilbert space and it is in fact the closure of $\C_0^\infty\!\big(\,\cl{\mathcal O\times\mathbb R}\,\big)$ for the homogeneous norm $\|\cdot\|_{\dot \E}$
and it coincides with the space $\dot{L}^2_{1,1/2}(\Omega)$.

As shown in \cite{AEN} (with a small generalization), functions from $\dot \E $ have well defined Dirichlet traces with values in
 the \emph{homogeneous parabolic Sobolev space} $\Hdot^{1/4}_{\pd_{t} - \Delta_x}(\partial\mathcal O\times\mathbb R)$. 
 
 Here, $\Hdot^{s}_{\pm \pd_{t} - \Delta_x}(\R^n)$ is defined as the closure of Schwartz functions $v \in \mS(\ree)$ with Fourier support away from the origin in the norm $\|\cF^{-1}((|\xi|^2 \pm i \tau)^s \cF v)\|_2$. This yields a space of tempered  distributions modulo constants in $\Lloc^2(\ree)$ if $0 < s \leq 1/2$. 
 Conversely, any $g \in \Hdot^{1/4}_{\pd_{t} - \Delta_x}(\R^n)$ can be extended to a function $v \in  \dot \E(\R^n_+\times\R)$ with trace $v\big|_{\partial\mathbb R^{n}_+\times\R} = g$.  
 
 Analogously, the space $\Hdot^{1/4}_{\pd_{t} - \Delta_x}(\partial\mathcal O\times\mathbb R)$ for a Lipschitz
 $\phi:\R^{n-1}\to\R$ with
$$ \partial\mathcal O=\{(x',x_n):\,x_n=\phi(x')\},$$
 can be defined via the projection $(x',x_n,t)\mapsto (x',t)$ from $\partial\mathcal O\times\R\to\R^{n-1}\times\R$ and once again $\Hdot^{1/4}_{\pd_{t} - \Delta_x}(\partial\mathcal O\times\mathbb R)$ is exactly the space of traces of functions from $\dot \E(\mathcal O\times\mathbb R)$.
 Finally,  via partition of unity we can then define 
 $\Hdot^{1/4}_{\pd_{t} - \Delta_x}(\partial\mathcal O\times\mathbb R)$ for $\mathcal O$ a bounded Lipschitz domain.\medskip
  
Hence, by the energy solution to $-\partial_tu + \divg(A\nabla u)=0$ with Dirichlet boundary datum $u\big|_{\partial\mathcal O\times\R} = f \in \Hdot^{1/4}_{\pd_{t} - \Delta_x}(\partial\mathcal O\times\mathbb R)$ (understood in the trace sense) we mean $u \in \dot\E(\mathcal O\times\mathbb R)$ such that
\begin{align*}
a(u,v):= \iint_{\mathcal O\times\R} \left[A \nabla u \cdot{\nabla v} + \HT \dhalf u \cdot {\dhalf v}\right] \d X \d t = 0,
\end{align*}
holds for all $v \in \dot \E_0$, the subspace of $\dot \E$ with zero boundary trace.

Moving onto the Neumann problem, given any $u \in \dot \E(\Omega)$, the co-normal derivative $\partial^A_\nu u\Big|_{\partial\Omega}:=\langle A\nabla u,\nu\rangle \Big|_{\partial\Omega}=g$ is defined via the formula
\begin{align}\label{eq.NeumannBdy}
 \iint_{\mathcal O\times\R} \left[A \nabla u \cdot{\nabla v} + \HT \dhalf u \cdot {\dhalf v}\right] \d X \d t -\int_{\partial\mathcal O
 \times\R}gv \d x\d t= 0,
\end{align}
for all $v \in \dot \E$. Here, since the traces of $v$ belong to $\Hdot^{1/4}_{\pd_{t} - \Delta_x}(\partial\Omega)$ and all elements of the space $\Hdot^{1/4}_{\pd_{t} - \Delta_x}$ are realized by some $v \in \dot \E$,
 the Neumann boundary data must by duality  naturally belong to the space $\Hdot^{-1/4}_{\pd_{t} - \Delta_x}(\partial\Omega)$.

By \cite{AEN}, the key to solving these problems is the introduction of the modified sesquilinear form (introduced earlier in \cite{Ny2016}):
\begin{equation}\label{eq-sesq}
 a_\delta(u,v) := \iint_{\mathcal O\times\R} \left[A \nabla u \cdot {\nabla (1+\delta \HT) v} + \HT \dhalf u \cdot {\dhalf (1+\delta \HT) v}\right] \d X \d t,
\end{equation}
where $\delta$ is a  real number yet to be chosen. The Hilbert transform $\HT$ is a skew-symmetric isometric operator with inverse $-\HT$ on both $\dot \E$ and $\Hdot^{1/4}_{\pd_{t} - \Delta_x}$.  Hence, $1+\delta \HT$ is invertible on these spaces for any $\delta \in \R$. Hence for a fixed $\delta>0$  small enough, $a_\delta$ is coercive on $\dot \E$ since
\begin{equation}\label{eq:coer}
 a_\delta(u,u) \ge (\lambda-\Lambda\delta )\|\nabla u\|_2^2 + \delta \|\HT \dhalf u \|_2^2.
\end{equation} 
In particular $\delta=\lambda/(\Lambda+1)$ would work.
To solve the Dirichlet problem we take an extension $w \in \dot \E$ of the data $f$ and apply the Lax-Milgram lemma to $a_\delta$ on $\dot \E_0$ to obtain some $u \in \dot \E_0$ such that 
\begin{align*}
 a_\delta(u,v) = - a_{\delta}(w,v) \qquad (v \in \dot \E_0).          \end{align*}
Hence, $u + w$ is an energy solution with data $f$. Should there exist another solution $v$, then $a_\delta(u+w-v,u+w-v) = 0$ and hence by coercivity $\|u +w - v \|_{\dot \E} = 0$. Thus the two solutions only differ by a constant. It means that the Dirichlet problem associated with our parabolic PDE is \emph{well-posed} in the energy class. Similar arguments allow us to solve the Neumann problem by considering for the datum $g \in \Hdot^{-1/4}_{\pd_{t} - \Delta_x}(\partial\Omega)$ the solution $u
\in \dot\E$ such that 
\begin{align*}
 a_\delta(u,v) = \langle g, \mbox{Tr } (1+\delta \HT)v\rangle\qquad (v \in \dot \E).          \end{align*}

\begin{definition}[\Dq]\label{def.Dq}\hypertarget{Dq}{}
    Let $p\in(1,\infty)$. We say that the $L^{p'}$ Dirichlet problem is solvable for $L^*$, denoted by \Dq or \Dq$^{L^*}$, if there exists a constant $C>0$ such that for all $g \in \Hdot^{1/4}_{\pd_{t} - \Delta_x}(\pom)\cap L^{p'}(\pom)$, the energy solution $u\in \dot\E(\om)$ to $L^* u=\partial_tu+\divg(A^T\nabla u) = 0$ in $\om$ with trace $u|_{\pom}=g$ satisfies the estimate
    \[
    \|N(u)\|_{L^{p'}(\pom)}\le C\|g\|_{L^{p'}(\pom)}.
    \]
\end{definition}
\begin{definition}[\Np]\label{def.Np}\hypertarget{Np}{}
     Let $p\in(1,\infty)$. We say that the $L^{p}$ Neumann problem is solvable for $L$, denoted by \Np or \Np$^{L}$, if there exists a constant $C>0$ such that for all $g \in \Hdot^{-1/4}_{\pd_{t} - \Delta_x}(\partial\Omega)\cap L^p(\pom)$, the energy solution $u\in \dot\E(\om)$ to $Lu = 0$ in $\om$ with $\dr_\nu^Au|_{\pom}=g$ (defined as in \eqref{eq.NeumannBdy}) satisfies the estimate
    \[
    \|\wt N(\nabla u)\|_{L^p(\pom)}\le C\|g\|_{L^p(\pom)}.
    \]
\end{definition}
 
\section{Basic estimates and known results}
We assume $\om=\mathcal O\times\R$ where $\mathcal O$ is a Lipschitz domain. We recall four results shown in \cite{DLP3}, the first being a version of Caccioppoli's inequality, the second a result on boundedness of solutions near the boundary, the third the main theorem of the paper about bounds for $\wt N$ for the Neumann problem, and the fourth an interesting version of the Poincar\'e inequality for parabolic solutions.

\begin{lemma}[Caccioppoli's inequality]\label{L:Caccio}
 Let $J=J_r(X,t)$ be a backward parabolic cylinder that either satisfies $J_{4r}=J_{4r}(X,t)\subset\om$ or $(X,t)\in\pom$ (i.e. it is centered at $\pom$). Let $u$ be a weak solution of $Lu=0$ in $J_{4r}(X,t)\cap\om$, and assume that $u$ has zero Neumann data on $J_{4r}\cap\pom$ if $(X,t)\in\pom$.
	Then there exists a constant $C=C(\lambda,\Lambda, n)$ such that for any constant $c$,
	\[
	    \iint_{J_{r}(X, t)\cap\om} |\nabla u|^{2} dYds 
			\leq \frac{C}{r^2} \iint_{J_{2r}(X, t)\cap\om} |u(Y,s)-c|^{2} dYds.
	\]
\end{lemma}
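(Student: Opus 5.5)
The Caccioppoli inequality (Lemma \ref{L:Caccio}) will be proved by the standard energy method: test the equation against $(u-c)\psi^2$ with a cutoff in space and time. One point of care is that the time cutoff is one-sided, i.e. adapted to the \emph{backward} cylinder, so that the term from $\partial_t u$ has a sign or is controlled. Fix $c$ and set $w=u-c$, which solves the same equation, has the same Neumann data and the same gradient.

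Choose $\psi(Y,s)=\eta(Y)\chi(s)$ as follows. Take $\eta\in C_0^\infty(Q_{2r}(X))$ with $\eta\equiv1$ on $Q_r(X)$ and $|\nabla\eta|\lesssim 1/r$. Take $\chi$ with $\chi\equiv 0$ for $s<t-4r^2$, $\chi\equiv1$ on $[t-r^2,t]$, and $|\chi'|\lesssim r^{-2}$. Only the past direction matters.

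Case 1: $J_{4r}\subset\Omega$. Here $\eta$ vanishes near the lateral boundary of $Q_{2r}$.

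Case 2: the center lies on $\partial\Omega$. Here $\eta$ does not vanish on $\partial\Omega$, but the boundary integral disappears because the conormal derivative is zero on $J_{4r}\cap\partial\Omega$. By \eqref{eq.NeumannBdy}, localized, the test function is admissible since it is supported in $J_{4r}$ in the appropriate sense.

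For $\tau\in(t-r^2,t)$, use $w\psi^2\mathbf 1_{\{s<\tau\}}$ as test function in the weak formulation. This gives
\[
\tfrac12\int_{\Omega\cap\{s=\tau\}} w^2\psi^2\,dY+\iint_{s<\tau} A\nabla w\cdot\nabla w\,\psi^2
=\iint_{s<\tau} w^2\psi\,\partial_s\psi-2\iint_{s<\tau}\psi w\,A\nabla w\cdot\nabla\psi .
\]
The sign of the time term comes from $-\partial_t u$, i.e. the forward equation, so the time-slice term is nonnegative on the left. Apply ellipticity \eqref{E:elliptic} on the left, and on the right use Cauchy--Schwarz with $\epsilon$ to absorb $\epsilon\iint|\nabla w|^2\psi^2$. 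The remaining terms are bounded by
\[
C\iint (|\nabla\psi|^2+|\psi\,\partial_s\psi|)\,w^2 \lesssim r^{-2}\iint_{J_{2r}\cap\Omega} w^2 .
\]
Here the support of $\psi$ is in $Q_{2r}(X)\times(t-4r^2,t)=J_{2r}(X,t)$, matching the statement. Letting $\tau\uparrow t$ then gives the claim with $C=C(\lambda,\Lambda,n)$.

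The main technical obstacle is justifying the use of $w\psi^2\mathbf 1_{s<\tau}$ as a test function, since weak solutions have only $\nabla u\in L^2$ and no time derivative in $L^2$. I would handle this with Steklov averages in time, $u_h(s)=\frac1h\int_s^{s+h}u$: write the equation for $u_h$, test with $(u_h-c)\psi^2$, where the integration by parts in $s$ is legitimate, and then let $h\to0$ using $u\in L^2_{loc}$ and $\nabla u\in L^2_{loc}$. Alternatively, for energy solutions in $\dot{\E}$, one can use the half-derivative formulation, since $\HT\dhalf$ pairs to give the time-derivative term for such test functions. In Case 2, one must also check that the localized Neumann condition lets the boundary term vanish for test functions supported in $J_{4r}$. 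This follows from \eqref{eq.NeumannBdy} after extending the cutoff test function by zero, which keeps it in $\dot\E$ locally.
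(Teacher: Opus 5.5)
Your proof is correct and is the standard energy argument behind this lemma: you test the forward equation with $(u-c)\psi^2\1_{\{s<\tau\}}$ on the backward cylinder, drop the nonnegative time-slice term, absorb the cross term by ellipticity, and justify the computation via Steklov averages, which are compatible with the lateral Neumann condition because $\pom=\partial\mathcal O\times\R$ is time-independent; the paper gives no proof of its own and simply recalls the lemma from \cite{DLP3}. The only inaccuracy is your aside about the $\HT\dhalf$ formulation: $w\psi^2\1_{\{s<\tau\}}$ has a jump in time, so it is not in $\dot\E$ and cannot be inserted there directly, but the Steklov route you actually rely on is sufficient.
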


\begin{lemma}\label{lem.MoserNeu}
Let $J_r$ and $u$ be as in Lemma~\ref{L:Caccio}.
    %Let $(x,t)\in\pom$. Let $u$ be a weak solution to \eqref{E:pde} in $Q_{4r}(x,t)\cap\om$ with zero Neumann data on $Q_{4r}(x,t)\cap\pom$. 
    There exists constant $C=C(\lambda,\Lambda,n)$ such that 
    \begin{equation}\label{eq.localbdd}
          \sup_{J_{r/2}\cap\om}|u| \le \frac{C}{r^{n+2}}\iint_{J_{2r}\cap\om}|u(Y,s)|dYds.
    \end{equation}
  \end{lemma}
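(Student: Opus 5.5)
We prove \eqref{eq.localbdd} by Moser iteration applied to the even (reflected) extension of $u$ across the lateral boundary. It is convenient to treat the interior case and the boundary case together. In the interior case ($J_{4r}\subset\om$) the estimate is the classical local boundedness for weak solutions of $Lu=0$, as in Moser or Aronson--Serrin. The real issue is therefore the boundary case $(X,t)\in\pom$, where we must use the zero Neumann data on $J_{4r}\cap\pom$.

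\textbf{Step 1 (reduction to $L^2$).} By Lemma~\ref{L:Caccio} and a standard covering and rescaling argument, it suffices to show
$$\sup_{J_{r/2}\cap\om}|u|\le C\Big(r^{-(n+2)}\iint_{J_{r}\cap\om}|u|^2\Big)^{1/2}.$$
The $L^1$ version with $J_{2r}$ on the right then follows by the usual iteration trick: combine the $L^2$ bound on nested cylinders $J_\rho\subset J_{\rho'}$ with Young's inequality,
$$\|u\|_{L^\infty(J_\rho)}\le \tfrac12\|u\|_{L^\infty(J_{\rho'})}+C(\rho'-\rho)^{-(n+2)}\|u\|_{L^1(J_{\rho'})},$$
and absorb the first term via the standard lemma on nested radii.

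\textbf{Step 2 (local flattening).} Flatten the boundary locally by the bi-Lipschitz map $(x',x_n,t)\mapsto(x',x_n-\phi(x'),t)$. This map depends only on space and fixes $t$, so it preserves the parabolic structure. It transforms $L$ into an operator of the same type on a half-cylinder, whose ellipticity constants depend only on $\lambda,\Lambda,\ell$. The weak formulation with zero conormal data reads
$$\iint -u\,\partial_t\varphi+A\nabla u\cdot\nabla\varphi=0$$
for all test functions $\varphi$ supported in $J_{4r}$, with no vanishing required on $\pom$. In particular it allows test functions that do not vanish on the boundary.

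\textbf{Step 3 (energy estimates for powers of $u$).} We run the usual Moser iteration directly, without reflecting. Test with $\varphi=\eta^2 (u_+\wedge k)^{\beta}u_+$ (Steklov-averaged in time for rigour), where $\eta$ is a cutoff in $J_\rho$ that need not vanish on $\pom$. The Neumann condition kills all boundary terms, so we obtain the energy inequality
$$\sup_s\int \eta^2 w^2+\iint|\nabla(\eta w)|^2\le C(1+\beta)\iint (|\nabla\eta|^2+\eta|\partial_t\eta|)w^2,\qquad w=u_+^{(\beta+2)/2}.$$
We then apply the parabolic Sobolev embedding on $\Omega\cap J_\rho$. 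This is valid because Lipschitz domains are Sobolev extension domains, with constants depending on the Lipschitz character at scale $\rho\le d$. This gives the reverse-H\"older gain $\kappa=1+2/n$, and iterating over radii $\rho_k=r/2+2^{-k}r/2$ yields the $L^2\to L^\infty$ bound. We then repeat the argument for $u_-$.

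The main obstacle is the boundary Sobolev inequality for $\eta w$ when $\eta$ does not vanish on $\pom$. We handle it by extending $\eta w$ evenly across the flattened boundary, which preserves the $H^1$ norm up to a constant. The standard scale-invariant Gagliardo--Nirenberg inequality then applies. This also requires $r\lesssim d$ in the bounded case; for larger $r$ the estimate follows by covering.
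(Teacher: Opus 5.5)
Your Moser iteration is the standard argument, and it is correct wherever the statement actually holds. That means boundary-centred cylinders with $r\lesssim d$, so that $J_{4r}\cap\om$ lies in one coordinate cylinder; when $\mathcal O$ is a graph domain the flattening is global and every $r$ is allowed. Within that range the following steps are exactly what is needed:
\begin{itemize}
\item testing with $\eta^2(u_+\wedge k)^\beta u_+$, where $\eta$ need not vanish on $\pom$, and dropping the boundary term by the zero conormal data;
\item proving the Gagliardo--Nirenberg inequality for $\eta w$ by flattening and even reflection;
\item passing from $L^2$ to $L^1$ by absorption over nested radii.
\end{itemize}
The paper gives no proof of this lemma; it quotes it from \cite{DLP3}, where $\mathcal O$ is a graph domain. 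Your opening sentence (Moser applied to the reflected $u$) and your Step~3 (no reflection of $u$, only of $\eta w$) describe two different routes. Both work, but you should choose one. You are right that the constant must also depend on $\ell$, which the stated $C(\lambda,\Lambda,n)$ hides.

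The step that fails is your last sentence. For a bounded base $\mathcal O$, covering cannot extend the estimate to large $r$, because the estimate is false there. Take $u\equiv 1$, which solves $Lu=0$ with zero Neumann data, and $(X,t)\in\pom$ with $r>2\diam(\mathcal O)$. Then $J_{r/2}\cap\om=\mathcal O\times(t-r^2/4,t)$ and $J_{2r}\cap\om=\mathcal O\times(t-4r^2,t)$. The left-hand side is $1$, while the right-hand side is $4C|\mathcal O|\,r^{-n}\to 0$.

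What covering by cylinders of size $\sim\min(r,d)$ actually gives is $\sup_{J_{r/2}\cap\om}|u|\le C\min(r,d)^{-(n+2)}\iint_{J_{2r}\cap\om}|u|$. That is your bound up to a factor $(r/d)^{n+2}$. This factor is a harmless domain constant for $r\lesssim\diam(\mathcal O)$, but it is unbounded as $r\to\infty$. The normalisation $r^{-(n+2)}$ is only correct when $|J_{2r}\cap\om|\sim r^{n+2}$.

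So you should state and prove the lemma with a scale restriction such as $r\le d$. That is all the paper uses; for instance, in deriving \eqref{zmez} (with $d=1$) cylinders of size at most $d/2$ suffice.

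If you want a statement valid at all scales, replace $r^{-(n+2)}\iint$ by $\fiint$. That version is true, but it needs an ingredient beyond covering. One option is that $s\mapsto\|u(\cdot,s)\|_{L^1(\mathcal O)}$ is nonincreasing when the Neumann data vanish on $\partial\mathcal O\times(t-4r^2,t)$; you can prove this by testing with a smoothed $\mathrm{sgn}(u)$. This monotonicity lets you bound the $L^1$ mass on the short windows of length $\sim d^2$ produced by the covering by the average over the whole window $(t-4r^2,t)$.
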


\begin{theorem}\label{thm.NtoLoc}
Let $\om=\mathcal O\times\R$ where $\mathcal O$ is either a bounded or unbounded Lipschitz domain in $\Rn$. Let $L=-\dr_t +\divg (A\nabla \cdot)$, and let $L^*=\dr_t+\divg(A^T\nabla\cdot)$ be the adjoint operator of $L$.
    Let $p\in (1,\infty)$. Suppose that \Np$^L$ and \Dq$^{L^*}$ are solvable in $\om$ (see Definitions~\ref{def.Dq}, \ref{def.Np}), then for any backward parabolic cylinder $J_r(X,t)$ centered at some $(X,t)\in\pom$, and any local weak solution $u$ to $Lu=0$ in $J_{2r}(X,t)\cap\om$  with zero Neumann data on $J_{2r}\cap\pom$, we have
\begin{equation*} 
\|\wt N(\nabla u\1_{J_r})\|_{L^p(\partial \Omega)} \leq C r^{(n+1)/p} \fiint_{J_{2r}\cap \om} |\nabla u|\, dYds.
\end{equation*}
\end{theorem}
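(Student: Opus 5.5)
The plan is to localize by a cutoff, split the localized function into a Neumann piece (handled by \Np$^L$) and a source piece, and use duality with \Dq$^{L^*}$ only for the source piece, which is the hard step. Fix $(X,t)\in\pom$, $r>0$, and a local solution $u$ as in the statement. Choose a constant $c$, to be fixed later as a suitable average of $u$ over $J_{2r}\cap\om$. Let $\psi$ be a smooth cutoff adapted to backward cylinders with
\[
\psi\equiv1 \text{ on } J_{r}(X,t),\qquad \operatorname{supp}\psi\subset J_{3r/2}(X,t)\cup\{s\ge t\},
\]
\[
|\nabla\psi|\lesssim r^{-1},\qquad |\dr_t\psi|\lesssim r^{-2}.
\]
Since $u$ is only defined in $J_{2r}\cap\om$, set $v=\psi(u-c)$, extended by zero. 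Because $\psi=1$ on $J_r$, we have $\nabla v=\nabla u$ there. It also suffices to work with $v$ cut off in the past only, since the solution is only used on $J_{2r}$.

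A direct computation gives, in the weak sense,
\[
Lv = F:=-(\dr_t\psi)(u-c)+\divg\big((u-c)A\nabla\psi\big)+A\nabla u\cdot\nabla\psi .
\]
Its conormal derivative is $\dr^A_\nu v=g:=(u-c)\,A\nabla\psi\cdot\nu$, because $u$ has zero Neumann data on $J_{2r}\cap\pom$. The data $F$ and $g$ are supported in the annular region $J_{3r/2}\setminus J_{r}$. I write $v=v_1+v_2$, where:
\begin{itemize}
\item $v_1$ is the energy solution of $Lv_1=0$ with $\dr^A_\nu v_1=g$;
\item $v_2$ solves $Lv_2=F$ with zero Neumann data.
\end{itemize}
For $v_1$, the hypothesis \Np$^L$ gives
\[
\|\wt N(\nabla v_1)\|_{L^p(\pom)}\lesssim \|g\|_{L^p}\lesssim r^{(n+1)/p}\,r^{-1}\sup_{J_{3r/2}\cap\om}|u-c|.
\]
By Lemma \ref{lem.MoserNeu} (applied on slightly enlarged cylinders), the supremum is bounded by an $L^1$ average of $|u-c|$ over $J_{2r}\cap\om$. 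The parabolic Poincar\'e inequality for solutions (the fourth recalled result) then bounds $r^{-1}$ times that average by $\fiint_{J_{2r}\cap\om}|\nabla u|$, provided $c$ is chosen as the corresponding average.

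The main obstacle is $v_2$, where \Dq$^{L^*}$ enters. I would split $\wt N(\nabla v_2\1_{J_r})$ into two parts:
\begin{itemize}
\item a local part, from cones near $\Delta_{2r}$;
\item a far part, which is absent here because of the truncation $\1_{J_r}$, but which must be handled for points of the cone lying within $J_r$ at distance $\gtrsim r$.
\end{itemize}
On $J_{r}$ the function $v_2$ solves $Lv_2=0$ with zero Neumann data, since $F$ vanishes there. So by Lemma \ref{L:Caccio} and Lemma \ref{lem.MoserNeu}, $\nabla v_2$ on $J_r$ is controlled by $r^{-1}\sup_{J_{5r/4}}|v_2|$. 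The real difficulty is therefore an $L^\infty$ (or averaged $L^1$) bound on $v_2$ near the annulus. I would obtain it by duality:
\begin{enumerate}
\item For a test function $h$ supported in $J_{5r/4}\cap\om$, let $w$ be the adjoint Neumann-type solution $L^*w=h$.
\item Pair to get $\iint v_2 h=\iint F w$.
\item Integrate by parts in the divergence term of $F$, so that only $w$ and $\nabla w$ on the annulus appear.
\end{enumerate}
The needed size bounds on $w$ then follow from \Dq$^{L^*}$, via the adjoint version of the argument of \cite{KP93,FL}: solvability of the adjoint Dirichlet problem in $L^{p'}$ yields bounds on the adjoint Neumann Green function, namely $L^{p'}$-nontangential control of $w$ with the correct $r$-scaling.

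Combining gives $\sup_{J_{5r/4}}|v_2|\lesssim \fiint_{J_{2r}}\big(r^{-1}|u-c|+|\nabla u|\big)$ times the right scale. Integrating the resulting pointwise bound for $\wt N(\nabla v_2\1_{J_r})$ over its support, which is a surface ball of radius $\sim r$ of measure $\sim r^{n+1}$, produces the factor $r^{(n+1)/p}$. A final application of Poincar\'e yields the claimed estimate.

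I expect the duality step controlling $v_2$ to be the crux, and I have not verified that it goes through: one must check that \Dq$^{L^*}$ genuinely gives the Green-function bounds needed, without assuming any Regularity problem is solvable.
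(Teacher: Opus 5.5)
The paper does not prove this statement; it quotes it from \cite{DLP3}. Your argument therefore has to stand on its own, and it has a genuine gap. The gap is not in the duality step you flag but in how you treat $v_2$ near the boundary.

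You claim that, since $v_2$ solves $Lv_2=0$ with zero Neumann data near $J_r\cap\pom$, Lemmas~\ref{L:Caccio} and~\ref{lem.MoserNeu} turn a bound on $\sup|v_2|$ into a \emph{pointwise} bound $\wt N(\nabla v_2\1_{J_r})\lesssim r^{-1}\sup|v_2|$, which you then integrate over a surface ball of radius $\sim r$. This is false. The maximal function $\wt N$ takes $L^2$ averages of $\nabla v_2$ over balls $B_{\delta/2}(Y,s)$ with $\delta=\delta(Y,s)\to0$ along the cone. Caccioppoli at that scale, combined with boundary H\"older continuity, only gives $\delta^{-1}\operatorname{osc}_{B_{\delta}(Y,s)}v_2\lesssim\delta^{\alpha-1}r^{-\alpha}\sup|v_2|$, which blows up as $\delta\to0$. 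A bound uniform in $\delta$ would be a boundary Lipschitz estimate, and that fails even for the heat equation. Take a cylinder over a planar polygon with a reentrant corner of angle $\theta>\pi$, and polar coordinates $(\rho,\varphi)$ centred at the corner, $0<\varphi<\theta$. The bounded, time-independent solution $\rho^{\pi/\theta}\cos(\pi\varphi/\theta)$ has zero Neumann data near the corner, yet $\wt N$ of its gradient is unbounded there. Worse, what you actually need for $v_2$ is $L^p$ control of $\wt N(\nabla v_2\1_{J_r})$, in terms of interior quantities, for a solution with vanishing Neumann data near $J_r\cap\pom$. That is precisely Theorem~\ref{thm.NtoLoc} applied to $v_2$, so the argument is circular.

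The splitting $v=v_1+v_2$ buys nothing here. \Np$^L$ is applied only to $v_1$, while the entire boundary behaviour of $u$ on $\Delta_r$ has been moved into $v_2$. That function solves an inhomogeneous equation, to which no global $L^p$ hypothesis applies.

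Conversely, the step you call the crux is essentially free. The function $v=\psi(u-c)$ is bounded by $\sup_{J_{3r/2}}|u-c|$. The function $v_1$ is controlled in average on $J_{2r}$ by \Np$^L$, via $\iint_{J_{2r}\cap\om}|\nabla v_1|\lesssim r\int_{\Delta_{Cr}}\wt N(\nabla v_1)\,d\sigma$, together with a Poincar\'e argument. So averaged bounds for $v_2=v-v_1$ need no duality at all.

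What is missing is a mechanism through which the global hypothesis \Np$^L$ acts on the part of $u$ near $\Delta_r$. Some function agreeing with (a normalization of) $u$ near $\Delta_r$ has to be realized as a global \emph{homogeneous} Neumann solution with $L^p$ data. The remaining error must have a boundary gradient that is controlled for a structural reason. It is in bounding those Neumann data in $L^p$ with the right scaling (e.g.\ by duality against adjoint Dirichlet solutions) that \Dq$^{L^*}$ must enter.

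Separately, your boundary datum $g=(u-c)A\nabla\psi\cdot\nu$ is not well defined. The merely measurable matrix $A$ has no trace on $\pom$, and $\divg((u-c)A\nabla\psi)$ is only a distribution. The cutoff error has to be handled through the weak conormal functional rather than split into an interior source and a boundary flux.
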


Here, we have modified the statement of this theorem slightly (by taking the average of $|\nabla u|$ instead of $|\nabla u|^2$ but the claims are equivalent). \medskip

Finally, we also have:

\begin{lemma}[Poincar\'e inequality for solutions]\label{lem.Pnc_sol} 
Let $Q_0=Q_0'\times (T_0,T_1)$ be a cube satisfying $Q_0\cap\om\neq\emptyset$. 
Let $u$ be a weak solution of $Lu=-\div  F$ in $Q_0\cap\om$, and assume in addition that $u$ has zero Neumann data on $Q_0\cap\pom$ if $Q_0\cap\pom\neq\emptyset$.
Let $(X,t)\in Q_0\cap\overline{\om}$ and let $J_r:=J_r(X,t)$ be a backward in time parabolic cylinder that satisfies $J_{4r}\subset Q_0$.  

Then there is some constant $C$ such that for $c_u=\fiint_{J_r\cap\om}u\,dXdt$ we have that
\[
\iint_{J_r\cap\om}|u-c_u|dXdt\le C r\iint_{J_{2r}\cap\om}\br{|\nabla u|+|  F|}dXdt.
\]
\end{lemma}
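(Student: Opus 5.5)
The statement immediately above is Lemma~\ref{lem.Pnc_sol}, a Poincar\'e-type inequality for solutions on possibly boundary-touching backward cylinders. The plan is to compare $u$ with its spatial averages on time slices and to use the equation to control how those averages move in time.

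\textbf{Step 1: a cutoff.} Fix a spatial cutoff $\eta\in C_0^\infty(Q_{2r}(X))$ with $\eta\equiv1$ on $Q_r(X)$, $0\le\eta\le1$ and $|\nabla\eta|\lesssim 1/r$. Define the weighted slice averages
\[
m(s):=\frac{\int_{\mathcal O}u(Y,s)\eta(Y)\,dY}{\int_{\mathcal O}\eta\,dY}.
\]
Since $\mathcal O$ is Lipschitz and $(X,t)\in\overline{\om}$, we have $\int_{\mathcal O}\eta\gtrsim r^n$. The same Lipschitz property makes $Q_r(X)\cap\mathcal O$ and $Q_{2r}(X)\cap\mathcal O$ uniform John-type domains. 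Hence on each slice
\[
\int_{Q_{r}(X)\cap\mathcal O}|u(\cdot,s)-m(s)|\,dY\lesssim r\int_{Q_{2r}(X)\cap\mathcal O}|\nabla u(\cdot,s)|\,dY.
\]
This is the spatial Poincar\'e inequality on Lipschitz pieces, with a weighted mean; the constant depends on the Lipschitz character once $r$ is small relative to $r_0$. For large $r$ we use instead that $\mathcal O$ itself is Lipschitz. Integrating over $s\in(t-r^2,t)$ gives $\iint_{J_r\cap\om}|u-m(s)|\lesssim r\iint_{J_{2r}\cap\om}|\nabla u|$.

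\textbf{Step 2: time oscillation of $m$.} Test the weak formulation of $Lu=-\div F$ with $\eta(Y)\psi(s)$, where $\psi$ is a time cutoff. The test function is admissible because $u$ has zero Neumann data on $Q_0\cap\pom$, so no boundary term arises. Letting $\psi$ approximate $\1_{(s_1,s_2)}$ gives, for a.e. $s_1,s_2\in(t-4r^2,t)$,
\[
|m(s_2)-m(s_1)|\lesssim \frac{1}{r^n}\iint_{Q_{2r}(X)\times(s_1,s_2)\cap\om}\big(|A\nabla u|+|F|\big)|\nabla\eta|
\lesssim \frac{1}{r^{n+1}}\iint_{J_{2r}\cap\om}\big(|\nabla u|+|F|\big).
\]
The inclusion in $J_{2r}$ holds because $s_1,s_2\in(t-r^2,t)$.

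\textbf{Step 3: conclusion.} Put $\bar m:=m(s_0)$ for a fixed $s_0$. Then
\[
\iint_{J_r\cap\om}|m(s)-\bar m|\le |J_r\cap\om|\sup_s|m(s)-\bar m|\lesssim r^{n+2}\cdot r^{-n-1}\iint_{J_{2r}\cap\om}\big(|\nabla u|+|F|\big).
\]
Combining this with Step 1 bounds $\iint_{J_r\cap\om}|u-\bar m|$ by the right-hand side of the lemma. Finally, replacing $\bar m$ by the true average $c_u$ costs at most a factor $2$, since $\iint|u-c_u|\le2\inf_c\iint|u-c|$.

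The main obstacle is making Steps 1 and 2 rigorous when the cylinder touches the lateral boundary. There are two issues. First, the spatial Poincar\'e constant on $Q_r(X)\cap\mathcal O$ must be uniform in the position of $X$ and in $r$. I would handle this via the Lipschitz graph representation, using a bi-Lipschitz flattening in the relevant $\ell$-cylinder for $r\lesssim r_0$, and compactness or the global Lipschitz property of $\mathcal O$ for large $r$. Second, justifying $\eta\psi$ as a test function for weak (not energy) solutions requires a Steklov-average regularization in time, which is standard.
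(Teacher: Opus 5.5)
The paper gives no proof of this lemma; it is quoted from \cite{DLP3}, so I judge your argument on its own. Its analytic part is the standard mechanism and is fine. This covers three ingredients: the slice-wise Poincar\'e inequality with the $\eta$-weighted mean; the weak formulation tested with $\eta(Y)\psi(s)$, which is admissible without a boundary term precisely because the conormal data vanish on $Q_0\cap\pom$, and which controls $m(s_2)-m(s_1)$; and the final exchange of $\bar m$ for $c_u$. A minor point: the bound $\int_{\mathcal O}\eta\gtrsim r^n$ fails for $r\gg\diam\mathcal O$, but you do not need it. Since $\int_{\mathcal O}\eta\ge|Q_r(X)\cap\mathcal O|$ and $|J_r\cap\om|=r^2|Q_r(X)\cap\mathcal O|$, this factor cancels in Steps~1 and~3.

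The genuine gap is the geometric input of Step~1. It is not true that $Q_r(X)\cap\mathcal O$ and $Q_{2r}(X)\cap\mathcal O$ are uniformly John domains, or even connected, for all $X\in\overline{\mathcal O}$ and all $r$. Your proposed repairs do not change this: a bi-Lipschitz flattening is a homeomorphism, so it preserves the number of components, and compactness says nothing at intermediate scales.

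For example, let $\mathcal O\subset\R^2$ be a smooth U-shaped domain with long parallel arms at distance $g$. Put $X$ on the inner side of one arm, at distance $>10g$ from the bend, and let $r=2g$. Then $Q_r(X)$ meets both arms, while $Q_{5r}(X)$ does not reach the bend. The same happens at small scales near a steep sawtooth portion of $\partial\mathcal O$ with $\ell\gg1$: an axis-parallel cube centred at the bottom of a valley can meet the neighbouring valleys while its top stays below the separating peaks. In either case take $Q_0=Q_{5r}(X,t)$. Then $Q_0\cap\om$ splits into several components, and a function $u$ equal to different constants on them is a weak solution with $F=0$ and zero Neumann data. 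The right-hand side of the lemma vanishes while the left-hand side does not.

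So Step~1 cannot be closed in the stated generality. You need a geometric restriction guaranteeing that the spatial base of $J_{2r}$ meets $\mathcal O$ in a connected Lipschitz region with Poincar\'e constant $\lesssim r$. Two options are:
\begin{enumerate}
\item interior cylinders, with $Q_0$ inside the region where the equation holds; this is how the lemma is invoked in Section~\ref{S5}.
\item at scales $r\lesssim r_0$, replace $Q_r(X)$ by boxes in the coordinates of the relevant $\ell$-cylinder, elongated by a factor $\sim\ell+1$ in the $x_n$-direction. Their intersection with $\mathcal O$ is a region above a Lipschitz graph.
\end{enumerate}
Under such a restriction your Steps~1--3 go through unchanged.
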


\section{Proof of Theorem~\ref{MT}}\label{S5}

\subsection{Strategy of the proof}
We assume that, for some $p\in(1,\infty)$, both \Np$^L$ and \Dq$^{L^*}$ are solvable. 
The domain is again of the form $\Omega=\mathcal O\times\R$, where $\mathcal O$ is a bounded Lipschitz domain.  We implement the following strategy to prove solvability of \Nq$^L$ for $1<q<p$:

We shall establish the end-point $p=1$ Hardy space bound of the form
\begin{equation}\label{AE}
\|\wt N(\nabla u)\|_{L^1(\partial\Omega)}\le C,
\end{equation}
for all $u$ that solve $Lu=0$ in $\Omega$ with Neumann boundary data $g$, where $g$ is any
$L^\infty$ atom, i.e.,  
$$\mbox{supp }g\subset Q_r(X,t)\cap \partial\Omega\quad\mbox{for some $(X,t)\in \partial\Omega$ and $r>0$, }
\|g\|_{L^\infty}\le |Q_r(X,t)\cap \partial\Omega|^{-1},\,\int_{\partial\Omega}g=0.$$
This bound implies solvability of the Neumann boundary value problem for $p=1$ for the operator $L$ with Hardy space data. Then by interpolation we get for all $1<q<p$ solvability of the \Nq$^{L}$, which proves the statement of Theorem \ref{MT} for the operator $L$.  Here we interpolate the sublinear functional
$$T:g\mapsto \wt N(\nabla u),$$
using the real interpolation method. Thanks to \eqref{AE} we know that $T$ is bounded from $\hbar^1_{ato}(\partial\Omega)\to L^1(\partial\Omega)$, where $\hbar^1_{ato}(\partial\Omega)$ is the atomic
Hardy space consisting of functions of the form $\sum_i\lambda_ig_i$ such that $\sum_i|\lambda_i|<\infty$ and each $g_i$ is an $L^\infty$ atom as defined above. $T$ is also bounded from $L^p(\partial\Omega)\to L^p(\partial\Omega)$ since we assume that \Np$^L$  is solvable. It then follows that $T$ is bounded as an operator on $L^q(\partial\Omega)$ for all $1<q<p$, thus implying solvability of \Nq$^L$.  \medskip

Recalling the definition of the Lipschitz domain
$\mathcal O\subset\R^n$ from Definition \ref{DefLipDomain}, a simple compactness argument implies that there exists $s_0<1$ such that the $\ell$-cylinders $s_0\Z_j$ still cover $\partial\mathcal O$. Thus by making the scale $r_0$ and diameter $d$ smaller if necessary (at the expense of increased $N$) we may without loss of generality assume that the union of $(1/2)\Z_j$ covers $\partial\mathcal O$.
 
 Then by rescaling the PDE, it may also be assumed that the $\ell$-cylinders $\Z_j$ in the definition above have diameter $d=1$. Let $N,\, C_0$ be as above and hence 
 there are $N$ such $\ell$-cylinders $(1/2)\Z_j$ needed to cover the boundary $\partial\mathcal O$.

Then by rotation and translation, we may assume that the atom $g$ is supported in a parabolic boundary cube
$Q_r(0,0)\cap \partial\Omega$, has zero average and that $d=1$ is the diameter of the $\ell$-cylinders
which define the set  $\mathcal O$ (c.f. Definition \ref{DefLipDomain}). Throughout this section $\wt N$ denotes the modified non-tangential maximal function normalized as in Remark \ref{rem.normalN}, so that its value at $(q,\tau)\in\pom$ only depends on the function on $\mathcal O\times(\tau-1,\tau+1)$.

\subsection{Large atoms}

We shall consider two cases. Firstly, the easier case when $r\gtrsim d=1$. In this case, clearly we have that
$$Q_r(0,0)\cap \partial\Omega\subset \partial\mathcal O\times (-r^2,r^2)\quad\mbox{and}\quad 
|Q_r(0,0)\cap \partial\Omega|\sim |\partial\mathcal O\times (-r^2,r^2)|,$$
and hence we might think about the atom $g$ as supported in 
$\partial\mathcal O\times (-r^2,r^2)$ with 
$$\|g\|_{L^\infty}\lesssim |\partial\mathcal O\times (-r^2,r^2)|^{-1},\qquad \int_{\pom}g=0.$$
Here $r\ge 1$.\medskip

We start with estimating $\wt N(\nabla u)$ in a neighbourhood of $\partial\mathcal O\times (-r^2,r^2)$.
By H\"older's inequality  and the $L^p$ Neumann solvability \Np$^{L}$ we have that
\begin{multline}\label{eq5.5}
\|\wt N(\nabla u) \|_{L^1(\partial\mathcal O\times (-4r^2,4r^2))}\le |\partial\mathcal O\times (-4r^2,4r^2)|^{1/p'}\left(\int_{\partial\mathcal O\times (-4r^2,4r^2)} \wt N(\nabla u)^p d\sigma\right)^{1/p}\\
\lesssim  |\partial\mathcal O\times (-r^2,r^2)|^{1/p'} \left(\int_{\partial\Omega}|g|^p\right)^{1/p}\le  
|\partial\mathcal O\times (-r^2,r^2)||\partial\mathcal O\times (-r^2,r^2)|^{-1}\le C. 
\end{multline}
Observe also that since $\partial_\nu^{A}u=0$ at the boundary for $t<-r^2$, we clearly must have $u\equiv c$ and hence $\nabla u\equiv 0$ for all $t<-r^2$. From this we clearly have that
$\wt N(\nabla u) (X,t)=0$ for $(X,t)\in\pom$ with $t<-4r^2$: by the normalization of $\wt N$ from Remark \ref{rem.normalN} its value at $(X,t)$ only depends on $\nabla u$ on $\mathcal O\times(t-1,t+1)$, and $-4r^2+1\le -r^2$ since $r\ge 1$.

It remains to estimate the $L^1$ norm of $\wt N(\nabla u)$ when $t>4r^2$. Here again we have
$\partial_\nu^{A}u=0$ (for all $t>r^2$) as we are outside the support of $g$.  We claim the following lemma holds (see also {\cite[Lemma 5.10]{DiS}} for a version of this lemma that applies to the Dirichlet problem):

\begin{lemma}\label{lemma:Exponential Decay} Let $p>1$. Assume that for the operator $L=-\partial_t+\divg(A\nabla\cdot)$, such that $A$ satisfies \eqref{E:elliptic} we have solvability of both 
\Np$^L$ and \Dq$^{L^*}$. 

 Let $v$ be a solution to $\L v=0$ in $\Omega$ and
    suppose that $\partial^A_\nu v=0$  on \( \partial\Base \times (0,\infty) \).
    There exists an \( \alpha=\alpha(n,\lambda,\Lambda,\mathcal O)>0 \) such that for all integers $k>0$ we have that
    \begin{align*}
        \| {\wt N}(\nabla v) \|_{L^1(\Delta_k)}
        \lesssim e^{-\alpha k} \| {\wt N}(\nabla v) \|_{L^1(\Delta_{0})}.     
    \end{align*}
Here $\Delta_i=\partial\mathcal O\times (i,i+1]$ and $\wt N=\wt N_{a}$ is the maximal function normalized as in Remark \ref{rem.normalN}, so that
the value of $\wt N_{a}(\nabla v)(X,t)$ only depends on the values of $\nabla v$ on the subset
$\mathcal O\times (t-1,t+1)$.
\end{lemma}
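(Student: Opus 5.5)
The plan is to combine an $L^2$ energy decay for the Neumann problem on the bounded base $\mathcal O$ with the local estimates of Section 3. These estimates convert between $L^1$ bounds on $\wt N(\nabla v)$ on time slabs and $L^2$ bounds on $\nabla v$ in the interior. The point is that zero Neumann data on $\partial\mathcal O\times(0,\infty)$ means the flux through the lateral boundary vanishes. Hence the spatial mean $m(t)=\fint_{\mathcal O}v(\cdot,t)\,dX$ is constant, $m(t)\equiv m$, for $t>0$ (test the equation against $v\equiv 1$ locally in time). Since $\mathcal O$ is a bounded Lipschitz domain, the Poincar\'e inequality $\int_{\mathcal O}|w-\fint_{\mathcal O} w|^2\le C_{\mathcal O}\int_{\mathcal O}|\nabla w|^2$ holds. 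The energy identity (justified for energy solutions, e.g.\ by Steklov averages) then gives
\[
\frac{d}{dt}\int_{\mathcal O}|v(X,t)-m|^2\,dX=-2\int_{\mathcal O}A\nabla v\cdot\nabla v\,dX\le -2\lambda\int_{\mathcal O}|\nabla v|^2\,dX\le -\frac{2\lambda}{C_{\mathcal O}}\int_{\mathcal O}|v-m|^2\,dX .
\]
Therefore $E(t):=\int_{\mathcal O}|v(\cdot,t)-m|^2$ satisfies $E(t)\le e^{-2\alpha(t-s)}E(s)$ for $0<s<t$, with $\alpha=\lambda/C_{\mathcal O}$. Integrating the identity over $(k-1,k)$ also yields $\iint_{\mathcal O\times(k-1,k)}|\nabla v|^2\lesssim E(k-1)$.

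\textbf{Upper step (from energy to $\wt N$ at time $k$).} I would cover $\partial\mathcal O$ by the finitely many unit-scale cylinders $(1/2)\Z_j$ and split $(k,k+1]$ into boundedly many subintervals. On each resulting backward cylinder $J_r$ centered on $\pom$ (with $r\sim 1$ small enough that $J_{2r}\subset\mathcal O\times(k-1,k+2)$ and the zero Neumann condition holds there), I apply Theorem \ref{thm.NtoLoc}. By the normalization of Remark \ref{rem.normalN}, $\wt N(\nabla v)$ on $\Delta_k$ only sees $\nabla v$ on $\mathcal O\times(k-1,k+2)$, so cutting $\nabla v$ into the pieces $\nabla v\1_{J_r}$ (boundary pieces) and an interior piece is enough. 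The interior piece is handled by the interior Caccioppoli inequality (Lemma \ref{L:Caccio}) together with Lemma \ref{lem.MoserNeu}. Using H\"older on the bounded set $\Delta_k$, this gives
\[
\|\wt N(\nabla v)\|_{L^1(\Delta_k)}\lesssim \iint_{\mathcal O\times(k-2,k+2)}|\nabla v|\lesssim \Big(\iint_{\mathcal O\times(k-2,k+2)}|\nabla v|^2\Big)^{1/2}\lesssim E(k-2)^{1/2}\le e^{-\alpha(k-3)}E(1)^{1/2},
\]
valid for $k\ge 4$. The finitely many small $k$ can be absorbed by enlarging the constant, once the base estimate below is available.

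\textbf{Lower step (from $E(1)$ back to $\Delta_0$).} It remains to show $E(1)^{1/2}\lesssim\|\wt N(\nabla v)\|_{L^1(\Delta_0)}$, and this is the step I expect to be the main obstacle. I would proceed in three moves.
\begin{enumerate}
\item Lemma \ref{lem.MoserNeu} (boundary version) and its interior analogue, applied on unit-scale backward cylinders ending at time $1$, give
\[
\sup_{\mathcal O\times\{1\}}|v-c|\lesssim\iint_{\mathcal O\times(1/2,1)}|v-c|
\]
for any constant $c$.
\item Lemma \ref{lem.Pnc_sol}, chained across the finitely many cylinders covering $\mathcal O\times(1/2,1)$ using connectedness of $\mathcal O$, replaces the right-hand side by $\iint_{\mathcal O\times(1/4,1)}|\nabla v|$ for a suitable common constant $c$. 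Since $E(1)\le\int_{\mathcal O}|v(\cdot,1)-c|^2$ for every $c$, this controls $E(1)^{1/2}$.
\item One must bound $\iint_{\mathcal O\times(1/4,1)}|\nabla v|$ by $\|\wt N(\nabla v)\|_{L^1(\Delta_0)}$. Points within distance $\sim a$ of $\partial\mathcal O$ lie in the cones $\Gamma_a(q,\tau)$ of a boundary set of measure $\sim\delta^{n+1}$. Averaging the Whitney balls over such $(q,\tau)$ gives $\iint_{\text{strip}}|\nabla v|\lesssim\|\wt N(\nabla v)\|_{L^1(\Delta_0)}$. For points deep in $\mathcal O$, which the small-aperture cones of Remark \ref{rem.normalN} may miss, I would first pass to a larger aperture. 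The $L^1$ norms are comparable, but only up to the harmless time-window bookkeeping; if necessary I would replace $\Delta_0$ by $\Delta_0\cup\Delta_1$ and shift indices. The remaining deep interior is then controlled by the near-boundary strip via interior Caccioppoli and the Poincar\'e chain above.
\end{enumerate}

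Combining the upper and lower steps yields $\|\wt N(\nabla v)\|_{L^1(\Delta_k)}\lesssim e^{-\alpha k}\|\wt N(\nabla v)\|_{L^1(\Delta_0)}$, with $\alpha$ depending only on $\lambda$, $\Lambda$, $n$ and $\mathcal O$.
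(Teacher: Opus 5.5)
This is essentially the paper's argument, with only minor differences in the two conversion steps: the $L^2$ decay of Proposition \ref{prop:Smooth Exponential Decay}, started from an early-time energy bounded by $\|\wt N(\nabla v)\|_{L^1(\Delta_0)}$ and converted back to $\wt N$ on $\Delta_k$ through Theorem \ref{thm.NtoLoc} plus interior estimates (the paper reaches the early-time energy via reverse H\"older, a good slice $\tau\in(\frac14,\frac34)$ and the slice Poincar\'e inequality rather than Moser plus a chained Poincar\'e inequality, and it controls $\nabla v$ near time $k$ through Lemma \ref{lem.MoserNeu} and Caccioppoli, where your integrated energy identity does this more directly). Your worry in step (3) is unfounded, because every $(Y,s)\in\Omega$ lies in $\Gamma_a(q,\tau)$ for all $(q,\tau)$ in the boundary ball of radius $a\,\delta(Y,s)$ about its nearest boundary point $(q_0,s)$, so you should drop the aperture change and the $\Delta_0\cup\Delta_1$ detour (which would weaken the statement); the same observation, namely that deep cone points at times $\le 1$ are already seen from $\Delta_0$, is what handles $k=1$, whose dependence window $\mathcal O\times(0,3)$ reaches back before any time at which $E$ is controlled, so ``enlarging the constant'' alone does not cover that case.
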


We claim the lemma above follows from Theorem \ref{thm.NtoLoc} and the following proposition.

\begin{proposition}\label{prop:Smooth Exponential Decay}
    Let \( u \) be such that \( {L}u = 0 \) in \( \Base \times [0,\infty) \) and
    moreover suppose that \( \partial_\nu^A u=0 \) on \( \partial \Base \times [0,\infty) \).
    Then there exists a \( \beta >0 \) such that for all \( t>0 \) and $c=\fint_{\mathcal O} u(\cdot,0)$
    \begin{align}
        \| u(\cdot,t)-c \|_{L^2(\mathcal{O})} 
        \lesssim e^{-\beta t} \| u(\cdot,0) -c\|_{L^2(\mathcal{O})}
        \lesssim e^{-\beta t} \| \nabla u(\cdot,0)\|_{L^2(\mathcal{O})}
         \label{eq:Exponential Decay}.
    \end{align}
\end{proposition}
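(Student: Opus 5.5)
We prove Proposition \ref{prop:Smooth Exponential Decay} by a standard energy argument, using conservation of the mean together with the Neumann Poincaré inequality on the bounded Lipschitz base $\mathcal O$.

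\textbf{Step 1: conservation of the mean.} We test the equation against the constant function $1$ on $\mathcal O$; this is legitimate because $u(\cdot,t)$ lies in $W^{1,2}(\mathcal O)$ for a.e.\ $t$ and $\mathcal O$ is bounded. Since $\partial^A_\nu u=0$ on $\partial\mathcal O\times[0,\infty)$, we get
$$\frac{d}{dt}\int_{\mathcal O}u(X,t)\,dX=\int_{\mathcal O}\divg(A\nabla u)\,dX=\int_{\partial\mathcal O}\partial^A_\nu u\,d\sigma=0.$$
Hence $\fint_{\mathcal O}u(\cdot,t)=c$ for all $t\ge0$. Set $w=u-c$. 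Then $w$ solves the same equation with zero conormal data, and $\int_{\mathcal O}w(\cdot,t)=0$ for every $t$.

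\textbf{Step 2: energy identity and Poincaré.} We test the equation against $w$ itself. Using the weak formulation (and, if needed, Steklov averages in time to justify the time derivative), the zero Neumann data and ellipticity \eqref{E:elliptic} give
$$\frac12\frac{d}{dt}\|w(\cdot,t)\|_{L^2(\mathcal O)}^2=-\int_{\mathcal O}A\nabla w\cdot\nabla w\,dX\le-\lambda\|\nabla w(\cdot,t)\|_{L^2(\mathcal O)}^2.$$
Since $w(\cdot,t)$ has zero mean on the bounded connected Lipschitz domain $\mathcal O$, the Poincaré inequality gives $\|w(\cdot,t)\|_{L^2}\le C_P\|\nabla w(\cdot,t)\|_{L^2}$, with $C_P=C_P(\mathcal O)$.

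\textbf{Step 3: Gronwall.} Combining the two bounds in Step 2,
$$\frac{d}{dt}\|w(\cdot,t)\|_{L^2}^2\le-\frac{2\lambda}{C_P^2}\|w(\cdot,t)\|_{L^2}^2.$$
Gronwall's inequality then yields the first inequality of \eqref{eq:Exponential Decay} with $\beta=\lambda/C_P^2$. The second inequality of \eqref{eq:Exponential Decay} is simply Poincaré applied at time $t=0$, because $c$ is the mean of $u(\cdot,0)$.

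\textbf{Main obstacle: rigor at $t=0$.} The expected difficulty is justifying the computation for an energy-class solution, for which a pointwise-in-time trace is not a priori available. We would handle this as follows.
\begin{itemize}
\item Show that $u\in C([0,\infty);L^2(\mathcal O))$. This follows from $u\in L^2_tW^{1,2}$ with $\partial_t u\in L^2_t(W^{1,2})^*$; the Neumann condition lets us pair $\partial_t u$ with $W^{1,2}(\mathcal O)$ functions rather than only $W^{1,2}_0$ functions.
\item Perform the energy estimate in its integrated form on $[s,t]$, and then let $s\to0^+$.
\item Alternatively, first derive the inequality on $[\epsilon,\infty)$, where interior-in-time regularity holds, and then pass to the limit $\epsilon\to0$.
\end{itemize}
Connectedness of $\mathcal O$, which is needed for Poincaré, is part of the meaning of "domain."
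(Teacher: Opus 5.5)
Your proposal is correct and follows the paper's proof: conservation of the spatial mean via the divergence theorem with zero conormal data, the energy identity with ellipticity, the zero-mean Poincar\'e inequality on $\mathcal O$, and Gronwall, with the second inequality coming from Poincar\'e at $t=0$. Working with $\|w(\cdot,t)\|_{L^2}^2$ rather than $\eta(t)$ avoids dividing by $\eta$, and your remarks on justifying the time derivative and on connectedness of $\mathcal O$ supply details the paper leaves implicit.
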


\begin{proof}
    Set \( \eta(t) = \| u(\cdot,t)-c \|_{L^2(\Base)} \), where $c$ is the average of $u$ at $t=0$. It follows that
    $c(t)=\fint_{\mathcal O} u(\cdot,t)$ is constant in $t$ and hence equals to $c$ for all $t>0$, since
    $$\partial_t c(t)=\fint_{\mathcal O\times\{t\}}\partial_t u=\fint_{\mathcal O\times\{t\}}\divg(A\nabla u)=\frac{1}{|\mathcal O|}\int_{\partial\mathcal O\times\{t\}}\partial_\nu^A u=0.$$    
    And for \( t > 0 \)
    \begin{align*}
        2\eta(t)\eta'(t) &= \partial_t (\eta(t)^2) = 2\int_{\mathcal O\times\{t\}} (u-c)\partial_t u
        = - 2 \int_{\mathcal O\times\{t\}} A \nabla u \cdot \nabla u
        \leq -2\lambda \int_{\mathcal O\times\{t\}} |\nabla u|^2.
    \end{align*}
    Since the average of $u$ on $\mathcal O\times\{t\}$ equals $c$, applying the Poincar\'e inequality on $\mathcal O\times\{t\}$ 
        we have that 
    \begin{align*}
        -\int_{\mathcal O\times\{t\}} |\nabla u|^2 
        \lesssim - \int_{\mathcal O\times\{t\}} |u-c|^2
        = -\eta(t)^2.
    \end{align*}
    Hence \( \eta'(t) \lesssim -\eta(t) \)
        and we may apply Gronwall's inequality to deduce \eqref{eq:Exponential Decay}. The last inequality then holds thanks to the Poincar\'e inequality. 
\end{proof}

Observe that this calculation holds on very general domains $\mathcal O$ as it only requires the Poincar\'e inequality and the constant in the final estimate only depends on the ellipticity constant of the matrix $A$ and the Poincar\'e constant of the domain $\mathcal O$.\vglue1mm

\noindent {\it Proof of Lemma \ref{lemma:Exponential Decay}:} Assume that $\|{\wt N}(\nabla v) \|_{L^1(\Delta_{0})}<\infty$ and that $\partial_\nu^Av\big|_{\partial\Omega}\equiv0$ for all times $\ge 0$. Denote by \( \Omega_i := \Base \times (i,i+1] \) and \( \Omega^s_i := \Base \times (i+\frac14,i+\frac34] \).
Clearly, then  using the reverse H\"older inequality for the gradient (a consequence of Caccioppoli's inequality):
$$\left(\int_{\Omega^s_{0}}|\nabla v|^2\right)^{1/2}\lesssim \int_{\Omega_{0}}|\nabla v|\lesssim \| {\wt N}(\nabla v) \|_{L^1(\Delta_{0})}.$$
Hence using the condition $\partial_\nu^A v=0$ for $t>0$ and $L^2$ integrability of the gradient (as a consequence of the Caccioppoli's inequality) we get that for some $\tau\in (\frac14,\frac34)$
$$\left(\int_{\mathcal O\times\{\tau\}}|\nabla v|^2\right)^{1/2}\lesssim \|{\wt N}(\nabla v) \|_{L^1(\Delta_{0})}<\infty.$$
Hence, by the Poincar\'e inequality on $\mathcal O\times\{\tau\}$ (for $c$ as in Proposition \ref{prop:Smooth Exponential Decay})
$$\left(\int_{\mathcal O\times\{\tau\}}|v-c|^2\right)^{1/2}\lesssim \| {\wt N}(\nabla v) \|_{L^1(\Delta_{0})}<\infty.$$

Fix now some $k>3$. We propagate the $L^2$ initial data given on $\mathcal O\times\{\tau\}$ using  Proposition \ref{prop:Smooth Exponential Decay} to the interval $t\in (k-3,k+4)$. It follows that for all such $t$ we have 

$$\| v(\cdot,t)-c \|_{L^2(\mathcal{O})} \lesssim e^{-\beta k} \| v(\cdot,\tau)-c \|_{L^2(\mathcal{O})}
 \lesssim e^{-\beta k} \|{\wt N}(\nabla v) \|_{L^1(\Delta_{0})}<\infty.$$
By integrating over the interval $(k-3,k+4)$ and then applying Lemma \ref{lem.MoserNeu} for solutions 
\begin{equation}\label{zmez}
\sup_{\Omega_{k-2}\cup\Omega_{k-1}\cup\Omega_{k}\cup\Omega_{k+1}\cup\Omega_{k+2}}|v-c|\lesssim e^{-\beta k} \| {\wt N}(\nabla v) \|_{L^1(\Delta_{0})}<\infty.
\end{equation}
Since by boundary Caccioppoli's inequality $\sup_{\Omega_{k-2}\cup\Omega_{k-1}\cup\Omega_{k}\cup\Omega_{k+1}\cup\Omega_{k+2}}|v-c|$
controls the $L^2$ norm of $\nabla v$ over the smaller set $\mathcal O\times (k-1, k+2)$, we may now bound $\wt N(\nabla v)$ on $\Delta_k$. Recall that by the normalization of Remark \ref{rem.normalN} the value of $\wt N(\nabla v)$ on $\Delta_k$ only depends on $\nabla v$ on $\mathcal O\times(k-1,k+2)$. We split each cone into its boundary part, where $\delta(Y,s)<\frac18$, and its interior part, where $\delta(Y,s)\ge\frac18$. Since the aperture satisfies $a\le\frac12$, the boundary part of a cone with vertex $(q,\tau)\in\Delta_k$ consists of points $(Y,s)$ with $|s-\tau|<(1+a)^2\delta(Y,s)^2<\frac1{16}$, and is therefore contained in the union of boundedly many backwards parabolic cylinders $J_{1/4}(X_i,t_i)$ centered at points $(X_i,t_i)\in\partial\mathcal O\times(k-\frac18,k+\frac98]$. The enlarged cylinders $J_{1/2}(X_i,t_i)$ span times in $(k-\frac38,k+\frac98]\subset (k-1,k+2)\cap(0,\infty)$, so the Neumann data of $v$ vanish there and Theorem \ref{thm.NtoLoc} applies to each $J_{1/4}(X_i,t_i)$, yielding (in combination with the $L^2$ bound for $\nabla v$ above)
$$\|\wt N(\nabla v\1_{J_{1/4}(X_i,t_i)})\|_{L^p(\partial\Omega)}\lesssim \fiint_{J_{1/2}(X_i,t_i)\cap\om}|\nabla v|\,dYds\lesssim e^{-\beta k}\|{\wt N}(\nabla v)\|_{L^1(\Delta_0)}.$$
Summing over the boundedly many $i$ bounds the contribution of the boundary parts of the cones. For the interior parts, the interior Caccioppoli inequality combined with \eqref{zmez} bounds $\big(\fiint_{B_{\eta\delta(Y,s)}(Y,s)}|\nabla v|^2\big)^{1/2}$ by $Ce^{-\beta k}\|{\wt N}(\nabla v)\|_{L^1(\Delta_0)}$ at every point of the dependence set $\mathcal O\times(k-1,k+2)$ with $\delta(Y,s)\ge\frac18$, so the interior part of $\wt N(\nabla v)$ satisfies the same bound pointwise on $\Delta_k$. Since $|\Delta_k|=O(1)$, combining the two contributions gives for all $k>3$:
$$
\left(\int_{\Delta_k} {\wt N}(\nabla v)^p\right)^{1/p} \lesssim e^{-\beta k}\| {\wt N}(\nabla v )\|_{L^1(\Delta_{0})},
$$
from which our claim follows as $\|{\wt N}(\nabla v )\|_{L^1(\Delta_{k})}\lesssim \|{\wt N}(\nabla v )\|_{L^p(\Delta_{k})}$. Notice that we have excluded $k=1,2,3$, but for these values of $k$ we do not have to establish any decay: the bound
$\|\wt N(\nabla v)\|_{L^1(\Delta_k)}\lesssim \|\wt N(\nabla v)\|_{L^1(\Delta_0)}$ (without the exponential gain) holds for $k=1,2,3$ directly. Indeed, boundary H\"older continuity of solutions and the maximum principle bound
$\sup_{\Omega_{k-1}\cup\Omega_k\cup\Omega_{k+1}}|v-c|\lesssim \|\wt N(\nabla v)\|_{L^1(\Delta_0)}$ (obtained as in \eqref{zmez}, using \eqref{eq:Exponential Decay} with the trivial factor $e^{-\beta k}\le 1$), followed by boundary Caccioppoli's inequality and the same splitting of the cones into boundary and interior parts as above --- with Theorem \ref{thm.NtoLoc} applied to the covering cylinders $J_{1/4}(X_i,t_i)$, whose enlargements $J_{1/2}(X_i,t_i)$ span times in $(k-\frac38,k+\frac98]\subset(0,\infty)$ also for $k=1,2,3$, so that the Neumann data of $v$ vanish there --- give the stated bound exactly as for $k>3$.
\qed\medskip

With this in hand we can estimate the norm of 
$\|\wt N(\nabla u) \|_{L^1(\partial\mathcal O\times (4r^2,\infty))}$. We apply 
Lemma \ref{lemma:Exponential Decay} to the solution u of $Lu=0$ whose Neumann data
vanish for all $t>r^2$. We fix an integer $i\in\N$ such that $r^2<i<i+1<4r^2$. Such an integer exists as $r\ge 1$.
By Lemma \ref{lemma:Exponential Decay} it follows that for some $\alpha>0$
$$ \| {\wt N}(\nabla u) \|_{L^1(\Delta_{i+k})}
        \lesssim e^{-\alpha k} \| {\wt N}(\nabla u) \|_{L^1(\Delta_{i})}\le e^{-\alpha k} 
        \|\wt N(\nabla u) \|_{L^1(\partial\mathcal O\times (-4r^2,4r^2))}\lesssim e^{-\alpha k},$$
by \eqref{eq5.5}.    Hence    
$$\|\wt N(\nabla u) \|_{L^1(\partial\mathcal O\times (4r^2,\infty))}
\le \sum_{k=1}^\infty \| {\wt N}(\nabla u) \|_{L^1(\Delta_{i+k})}
\lesssim \sum_{k=1}^\infty  e^{-\alpha k} \le C.$$       
Finally, by combining the estimate above with  \eqref{eq5.5} we get that  $\|\wt N(\nabla u) \|_{L^1(\pom)}\le C$
as desired.\medskip

\subsection{Small atoms, part 1}

We now consider the case $0<r<1$. In what follows $\kappa:=10\sqrt{n+1}$ denotes a fixed dimensional constant (any larger fixed value would also work). Without loss of generality we may assume that our atom supported in
$Q_r(0,0)\cap \partial\Omega$ lies in one of the sets $\Z_j\times (-r^2,r^2)$ for some $j\in\{1,2,\dots,N\}$.
Indeed, this is automatic, if $r<c_n$ for a small dimensional constant $c_n>0$ (chosen so that the spatial diameter of $Q_r(0)$ is less than $1/2$ and that $c_n\le \kappa^{-2}$) since then we have an implication
$$(Q_r(0)\cap \partial\Omega)\cap (1/2)\Z_j\ne\emptyset \Longrightarrow 
Q_r(0)\cap \partial\Omega\subset \Z_j.$$
Obviously, as the union of the sets $(1/2)\Z_j$ covers $\partial\mathcal O$, there is at least one $j$ for which the lefthand side of the implication above is true. Hence, we may assume that $r<c_n$ (if $r\in (c_n,1)$ then an atom
supported in $Q_r(0,0)$ can be written as a finite linear combination of atoms with smaller supports of 
size $r/2$ and therefore considering $r<c_n$ suffices). \medskip

We decompose our atom further. For some integer $K>1$ to be specified later we write $g$ supported in 
$Q_r(0,0)\cap \partial\Omega$ as 
$$
g=g_0+g_1+\dots+g_{2K-2},\quad\mbox{such that supp }g_i\subset Q_r(0)\times (-r^2+ir^2/K,-r^2+(i+2)r^2/K),
$$
$$\mbox{and }\int g_i=0,\qquad \|g_i\|_{L^\infty}\le C(K) \|g\|_{L^\infty}\qquad\mbox{for all }i=0,1,2,\dots,2K-2.$$
Again this is trivial as supports of functions $g_i$, $g_{i+1}$ overlap and hence we can arrange that the average of each function is zero. Thus up to a constant multiple $C(K)$ each $g_i$ is again an atom
and after translation we may therefore from now on assume that we have an atom $g$ with the following properties:
$$\mbox{supp }g\subset[ Q_r(0)\times (-r^2/K,r^2/K)]\cap\pom\subset Q_r(0,0),\quad\int g=0,\quad \|g\|_{L^\infty}\le C(K)r^{-n-1},$$
for some integer $K>1$ to be specified later.\medskip

We start by using the fact that \Np$^{L}$ is solvable. A calculation
similar to \eqref{eq5.5} yields
\begin{multline}\label{eq5.5a}
\|\wt N(\nabla u) \|_{L^1(Q_{8\kappa r}(0,0)\cap\partial\Omega)}\le |Q_{8\kappa r}(0,0)\cap\partial\Omega|^{1/p'}\left(\int_{Q_{8\kappa r}(0,0)\cap\partial\Omega} \wt N(\nabla u)^p d\sigma\right)^{1/p}\\
\lesssim  |Q_{8\kappa r}(0,0)\cap\partial\Omega|^{1/p'} \left(\int_{\partial\Omega}|g|^p\right)^{1/p}\\\le  
|Q_{8\kappa r}(0,0)\cap\partial\Omega|^{1/p'}|Q_{r}(0,0)\cap\partial\Omega|^{1/p}|Q_{r}(0,0)\cap\partial\Omega|^{-1}
\le C, 
\end{multline}
since  $|Q_{8\kappa r}(0,0)\cap\partial\Omega|\sim |Q_{r}(0,0)\cap\partial\Omega|$, as the measure on $\pom$ is doubling.     

\subsection{Unbounded domain related to $\Omega$} \label{ssUD}

Recall that we have that 
$$\mbox{supp $g\subset Q_r(0)\times (-r^2/K,r^2/K)\subset [\Z_j\times (-r^2/K,r^2/K)]\cap \pom$ for some $j\in\{1,2,\dots,N\}$}.$$
Without loss of generality assume that $j=1$ (by relabelling the indices if necessary). It then follows that inside this coordinate patch we can describe the boundary $\partial\mathcal O\times\R$ as follows. 
On $8\Z_1$ there exists
$\phi:\{x'\in\R^{n-1}:\,|x'|\le 8\}\to \R$  a Lipschitz function with Lipschitz constant $\ell$ such that $\phi(0)=0$
and in the coordinates $(x',x_n,t)$ the portion of domain $\Omega$ that belongs to $8\Z_1\times\R$ can be written
as
$$\Omega\cap (8\Z_1\times\R)=\{(x',x_n,t):\,|x'|\le 8,\, \phi(x')<x_n<8(\ell+1),\,t\in\R\},$$
with $\partial\Omega\cap( 8\Z_1\times\R)$ being the graph of function $\phi$ times $\R$.
Let us now choose a new function $\tilde{\phi}:\R^{n-1}\to \R$ with Lipschitz constant at most $C\ell$ (for some absolute constant $C$) having the following properties:

$$\tilde{\phi}(x')={\phi}(x')\quad\mbox{for all }|x'|\le 6, \qquad\mbox{and}\qquad \tilde{\phi}(x')=0\quad\mbox{for all }|x'|\ge 8.$$
This can be done trivially by multiplying $\phi$ by a cutoff function.\medskip

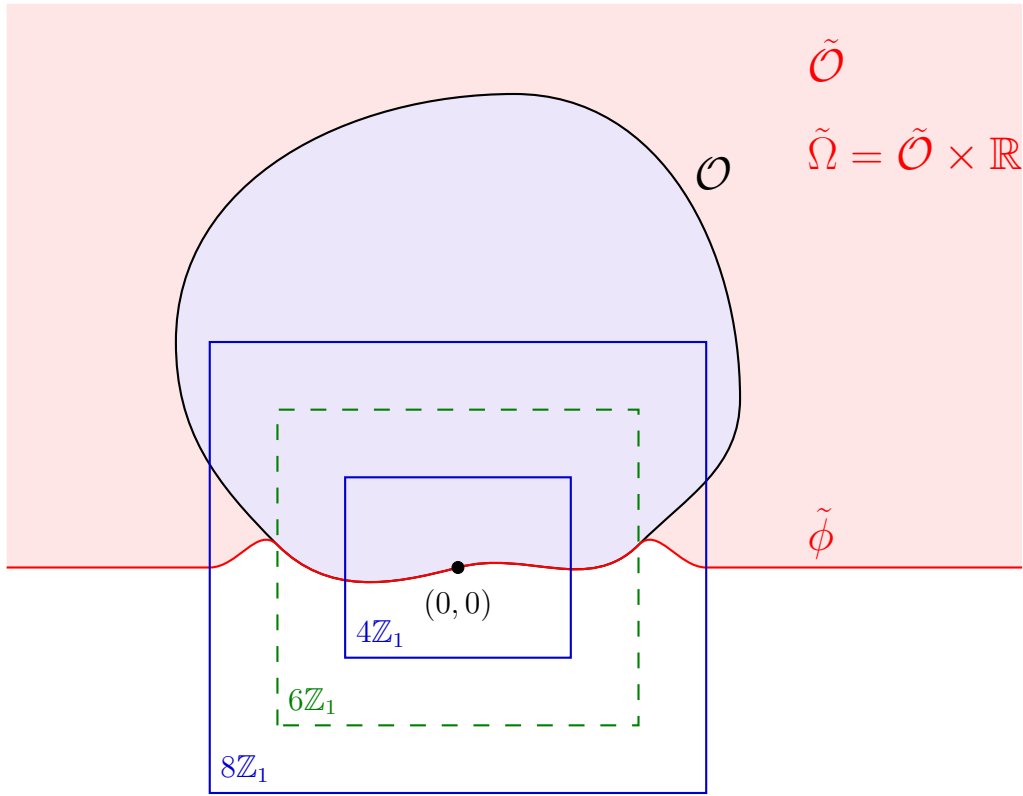
\begin{figure}[htbp]
    \centering
    \begin{tikzpicture}[scale=1.5]

        % 1. Shading for EVERYTHING above the red curve (Reddish region)
        \fill[red!10] 
            (-4.0, 0) -- (-2.2, 0)         
            to[out=0, in=135] (-1.6, 0.2)    % Transition now happens at the edge of 6z_1
            to[out=-45, in=195] (0, 0) to[out=15, in=225] (1.6, 0.2) % Shared segment widened to 6z_1
            to[out=45, in=180] (2.2, 0)    
            -- (5.0, 0) -- (5.0, 5.0) -- (-4.0, 5.0) -- cycle;

        % 2. Black irregular boundary (sigma) with Shading for \tilde{\Omega}
        \filldraw[fill=blue!10, fill opacity=0.8, draw=black, thick, draw opacity=1] 
            (-1.6, 0.2) to[out=-45, in=195] (0, 0) to[out=15, in=225] (1.6, 0.2) % Widened shared segment
            to[out=45, in=-90] (2.5, 1.5)  
            to[out=90, in=0] (0.5, 4.2)    
            to[out=180, in=90] (-2.5, 2)   
            to[out=-90, in=135] cycle;     

        % 3. Red axis 
        \draw[red, thick] 
            (-4.0, 0) -- (-2.2, 0)         
            to[out=0, in=135] (-1.6, 0.2)    
            to[out=-45, in=195] (0, 0) to[out=15, in=225] (1.6, 0.2) 
            to[out=45, in=180] (2.2, 0)    
            -- (5.0, 0);                   

        % 4. Labels 
        \node[font=\Large, anchor=west] at (2.0, 3.5) {$\mathcal O$};
        \node[red, font=\Large, anchor=west] at (3.0, 4.5) {$\tilde{\mathcal O}$};
        \node[red, font=\Large, anchor=west] at (3.0, 0.3) {$\tilde \phi$};
        \node[red, font=\Large, anchor=west] at (3.0, 3.7) {$\tilde{\Omega} = \tilde{\mathcal O} \times \mathbb{R}$};

        % 5. Nested Rectangles 
        \draw[blue!80!black, thick] (-2.2, -2.0) rectangle (2.2, 2.0);
        \node[blue!80!black, anchor=south west] at (-2.2, -2.0) {$8{\mathbb Z}_1$};

        \draw[green!50!black, thick, dashed, dash pattern=on 6pt off 6pt] (-1.6, -1.4) rectangle (1.6, 1.4);
        \node[green!50!black, anchor=south west] at (-1.6, -1.4) {$6{\mathbb Z}_1$};

        \draw[blue!80!black, thick] (-1.0, -0.8) rectangle (1.0, 0.8);
        \node[blue!80!black, anchor=south west] at (-1.0, -0.8) {$4{\mathbb Z}_1$};

        % 6. Origin point
        \filldraw[black] (0,0) circle (1.5pt);
        \node[anchor=north] at (0, -0.1) {$(0,0)$};

    \end{tikzpicture}
    \caption{Visualization of the domains $\mathcal O$ and $\tilde{\Omega}=\tilde{\mathcal O}\times\R$.}
    \label{fig:domain_visualization}
\end{figure}

We similarly extend the coefficients of our equation. The original parabolic PDE for a solution $u$ to $L u=0$ in $\Omega=\mathcal O\times\R$ can be written in the above coordinates as 
$$-\partial_tu+\div(A\nabla u)=0,\qquad\mbox{where $A$ is uniformly elliptic in }[8\Z_1\cap\mathcal O]\times \R.$$\medskip

We would like to extend the PDE to the domain $\tilde\Omega:=\{(x',x_n,t):\,x'\in\R^{n-1},\, x_n>\tilde\phi(x'),\,t\in\R\}$ such that 
the PDE will coincide with the original PDE for $u$ in the set $[4\Z_1\cap\mathcal O]\times \R$, and so that the new matrix $\tilde{A}$
will be uniformly elliptic.
This again is trivial, as we may consider a partition of unity $(\eta_1,\eta_2)$ subordinate to the 
cover of $\{(x',x_n):\,x'\in\R^{n-1},\, x_n>\tilde\phi(x')\}$ by the sets $6\Z_1\cap\mathcal O$ and $\R^n\setminus [4\Z_1\cap\mathcal O]$ with new matrix $\tilde{A}$ defined by
$$\eta_1(x',x_n)A(x',x_n,t)+\eta_2(x',x_n)I,\qquad\mbox{for all } \{(x',x_n,t):\,x'\in\R^{n-1},\, x_n>\tilde\phi(x'),\,t\in\R\}.$$
When $|x'|\le 4$ and $x_n<4(\ell+1)$ this preserves the property that $\tilde A=A$, while for $|x'|\ge 8$ or $x_n>8(\ell+1)$ the matrix is simply the identity matrix and thus trivially satisfies the ellipticity condition. 

It follows that a solution $u$ to the PDE $Lu=0$ in $\Omega$ with Neumann datum $g$
can also be seen as a solution of the new PDE with matrix $\tilde{A}$ on the set $[4\Z_1\cap\mathcal O]\times \R\subset \tilde{\Omega}$.\medskip

From the construction above, clearly $\tilde\Omega$ is an unbounded domain above the graph of a Lipschitz function $\tilde\phi$. We now solve a related Neumann problem on this new domain. 

\[ \begin{cases}
    -\partial_tw+\divg(\tilde A\nabla w) = 0, &\text{in } \tilde \Omega,
    \\
    \partial_\nu^{\tilde A} w= g&\text{on } \partial\tilde \Omega.
\end{cases} \]

We first record the basic a priori regularity of $w$: since
$g$ is a bounded compactly supported function, it belongs to the space $\Hdot^{-1/4}_{\pd_{t} - \Delta_x}(\partial\tilde\Omega)$, and therefore $w\in\dot\E(\tilde\Omega)$.
Our main objective is to establish the decay estimate \eqref{decay2} for $w$ (in this decay estimate $\tilde w$ is just an extension of $w$). It is not clear whether $u$ also enjoys such an estimate; this is one of the issues that make the Neumann problem significantly harder than the analogous Dirichlet problem, where it is possible to compare Green's functions defined on different domains (thanks to the usual comparison principle for solutions). Clearly, we cannot do the same for Neumann functions. Instead, we prove good bounds for $w$ and then we relate them to $u$ using Theorem \ref{thm.NtoLoc}. This theorem is our main tool in many arguments below. \medskip

As we have just said, the solutions $u$ and $w$ are related as they solve the same PDE on the portion of the domain
$[4\Z_1\cap\mathcal O]\times \R$ and also $L(u-w)=0$ there. Furthermore 
$\partial_\nu^{A} (u-w)=g-g= 0\,\text{on } [4\Z_1\cap\partial\mathcal O]\times \R$.

Let us rescale both PDEs (using parabolic dilation) so that now the support of $g$ is contained in the set
$Q_1(0)\times (-1/K,1/K)$. After this rescaling the Neumann datum still has average zero, but 
the size condition changes to $\|g\|_{L^\infty}\le C(K)$. We will refer to rescaled domains and PDEs as $r^{-1}\Omega$ and $r^{-1}\tilde\Omega$ and denote the two solutions we consider (slightly imprecisely) still by $u$ and $w$.
After this rescaling these two PDE coincide on the set 
$[4r^{-1}\Z_1\cap r^{-1}\mathcal O]\times \R$. Observe as $r$ gets smaller this set becomes larger.

Consider a backward parabolic cylinder $J_1=B_{\kappa}(0)\times (2/K-\kappa^2,2/K)$ and hence
$2J_1=J_2=B_{2\kappa}(0)\times (2/K-4\kappa^2,2/K)$; note that $B_{2\kappa}(0)\subset 4r^{-1}\Z_1$ since $r<c_n\le \kappa^{-2}$, so the PDEs for $u$ and $w$ coincide on $J_2\cap r^{-1}\Omega$. We apply Theorem \ref{thm.NtoLoc} for the solution $u-w$
on the set $J_2\cap r^{-1}\Omega$, where its Neumann boundary data vanish. The theorem clearly applies, as the solvability of the Neumann and Dirichlet problems on the domain $\Omega$ implies solvability of the rescaled boundary value problems on the domain $r^{-1}\Omega$.
Hence
\begin{equation*} 
\|\wt N(\nabla (u-w)\1_{J_1})\|_{L^p(\partial (r^{-1}\Omega))} \leq C \iint_{J_{2}\cap r^{-1}\om} |\nabla (u-w)| dXdt.
\end{equation*}
Also recall that by \eqref{eq5.5a} we have after rescaling that 
$$\|\wt N(\nabla u) \|_{L^1(Q_{8\kappa}(0,0)\cap\partial(r^{-1}\Omega))}\le C_K.$$
Using this and the equation above it then follows that 
\begin{equation*} 
\|\wt N(\nabla w\1_{J_1})\|_{L^1(\partial (r^{-1}\Omega))} \leq C_K+C \iint_{J_{2}\cap r^{-1}\om} |\nabla w| dXdt,
\end{equation*}
since on a set of size $O(1)$ clearly $\|\cdot\|_{L^1}\le C\|\cdot\|_{L^p}$ with $C\sim O(1)$.

Consider 
$$\wt{r^{-1}\Omega}_\varepsilon=\{(x,x_n,t):x_n>r^{-1}\tilde\phi(rx)+\varepsilon,\,t\in\R\},$$
denoting the set of points whose distance to $\partial(r^{-1}\tilde\Omega)$ is at least comparable to $\varepsilon>0$.
We make two important observations. First is that $\nabla w=0$ for $t\le-1/K$ which follows from the fact that the support of $g$ is contained in the set $t>-1/K$ and therefore $w$ must be constant for $t\le-1/K$. Without loss of generality we assume that $w=0$ for $t\le-1/K$. 
The second observation is that 
\begin{equation}\label{zex}
 \iint_{J_{2}\cap (r^{-1}\om\setminus \wt{r^{-1}\Omega}_\varepsilon)}|\nabla w| dXdt
\le \varepsilon \|\wt N(\nabla w\1_{J_2})\|_{L^1(\partial (r^{-1}\Omega))}.
\end{equation}
Below we shall prove that $\|\wt N(\nabla w\1_{B_{\kappa}(0)\times (-\infty,2/K)})\|_{L^1(\partial (r^{-1}\tilde\Omega))}$ controls $\wt N$ on a much larger set (c.f. \eqref{eq5.8}) and in particular
$$\|\wt N(\nabla w\1_{J_2})\|_{L^1(\partial (r^{-1}\tilde\Omega))}\le c_0(n,\lambda,\Lambda,\mathcal O)
\|\wt N(\nabla w\1_{B_{\kappa}(0)\times (-\infty,2/K)})\|_{L^1(\partial (r^{-1}\tilde\Omega))}.$$
This together with \eqref{zex} allows us to conclude that
\begin{equation}\label{B1}
\|\wt N(\nabla w\1_{B_{\kappa}(0)\times (-\infty,2/K)})\|_{L^1(\partial (r^{-1}\tilde\Omega))} \leq 2c_0C_K+2c_0C \iint_{(B_{2\kappa}(0)\times (-1/K,2/K))\cap \wt{r^{-1}\Omega}_\varepsilon} |\nabla w| dXdt,
\end{equation}
for sufficiently small $\varepsilon>0$. Here the choice of $\varepsilon>0$ is independent of the scale $r>0$.
We fix $\varepsilon>0$ from now on such that the estimate above holds.
\medskip

Following Brown \cite{B} (see also \cite{DLP3} where the same argument is used), and because
 $\partial^{\tilde A}_\nu w=0$ on the portion of boundary $\partial(r^{-1}\tilde\Omega)\setminus \overline{Q_1(0,0)}$,
we may consider an even reflection $\tilde w$ of $w$ across the boundary $\partial(r^{-1}\tilde\Omega)$. Let us make this reflection, and the associated reflected coefficients, fully explicit. Write $\Phi(x):=r^{-1}\tilde\phi(rx)$ for $x\in\R^{n-1}$; then $\Phi$ is Lipschitz with the same constant as $\tilde\phi$ and
$$r^{-1}\tilde\Omega=\{(x,x_n,t):\,x_n>\Phi(x)\},\qquad \partial(r^{-1}\tilde\Omega)=\{(x,x_n,t):\,x_n=\Phi(x)\}.$$
Consider the reflection map across this graph,
\begin{equation}\label{eqdef.reflec}
\rho(x,x_n,t):=(x,\,2\Phi(x)-x_n,\,t),
\end{equation}
which is a bi-Lipschitz involution of $\R^{n+1}$ (i.e. $\rho\circ\rho=\mathrm{id}$) that fixes $\partial(r^{-1}\tilde\Omega)$ pointwise and interchanges $r^{-1}\tilde\Omega$ with its complement. In the coordinates $(x,x_n,t)$ its Jacobian matrix is
$$D\rho=\begin{pmatrix} I_{n-1} & 0 & 0\\ 2\,\nabla\Phi(x)^{T} & -1 & 0\\ 0 & 0 & 1\end{pmatrix},\qquad |\det D\rho|=1,\quad \|D\rho\|_\infty\le 1+2\|\nabla\Phi\|_\infty.$$
We now define
\begin{equation}\label{eqdef.reflecw}
\tilde w(x,x_n,t):=\begin{cases}
w(x,x_n,t),&\quad x_n\ge \Phi(x),\\[1mm]
w\big(\rho(x,x_n,t)\big)=w(x,\,2\Phi(x)-x_n,\,t),&\quad x_n< \Phi(x),\end{cases}
\end{equation}
which extends $w$ to the set $\R^{n+1}\setminus \overline{Q_1(0,0)}$, together with the reflected coefficient matrix
\begin{equation}\label{eqdef.reflecA}
\dbtilde A:=\begin{cases}
\tilde A,&\quad\text{on }r^{-1}\tilde\Omega,\\[1mm]
(D\rho)\,\big(\tilde A\circ\rho\big)\,(D\rho)^{T},&\quad\text{on }\R^{n+1}\setminus r^{-1}\tilde\Omega.\end{cases}
\end{equation}
Since $D\rho$ is bounded and invertible with $|\det D\rho|=1$, and $\tilde A$ is bounded and uniformly elliptic, the matrix $\dbtilde A$ is again bounded and uniformly elliptic on $\R^{n+1}\setminus \overline{Q_1(0,0)}$, with constants depending only on $\lambda$, $\Lambda$ and the Lipschitz constant $\ell$; indeed for the lower bound $\langle\dbtilde A\,\xi,\xi\rangle=\langle(\tilde A\circ\rho)\,(D\rho)^{T}\xi,(D\rho)^{T}\xi\rangle\ge \lambda\,|(D\rho)^{T}\xi|^{2}\gtrsim_{\ell}|\xi|^{2}$. A direct change of variables in the weak formulation, using that the co-normal derivative $\partial^{\tilde A}_\nu w$ vanishes on $\partial(r^{-1}\tilde\Omega)\setminus\overline{Q_1(0,0)}$, shows that $\tilde w$ is a weak solution of the parabolic PDE
$$\partial_t\tilde w-\divg(\dbtilde A\nabla\tilde w)=0$$
across that portion of the boundary; this is precisely the reflection principle of \cite{B}. Finally, the reflection preserves the energy class \emph{trivially}: since $\rho$ acts only on the spatial variable $x_n$ and leaves the time variable $t$ untouched, while $|\det D\rho|=1$, one has for every space-time region $S\subset r^{-1}\tilde\Omega$
$$\int_{\rho(S)}|\nabla\tilde w|^2\,dXdt\le \|D\rho\|_\infty^{2}\int_{S}|\nabla w|^2\,dXdt,\qquad \|\HT\dhalf\tilde w\|_{L^2(\rho(S))}=\|\HT\dhalf w\|_{L^2(S)},$$
the last identity because $\HT$ and $\dhalf$ act in the time variable only, which $\rho$ does not move. Hence $\tilde w$ inherits the local energy bounds of $w$ and is again a (local) energy solution on $\R^{n+1}\setminus\overline{Q_1(0,0)}$.

We first establish that $\tilde w$ is bounded away from $Q_{1}(0,0)$ when $t<2/K$.  The argument here is as in \cite{DLP3} but fixes a small issue the argument in \cite{DLP3} contained.
Let  $(X,t)$ be any point such that $J_{1/4}(X,t)\cap Q_{1}(0,0)=\emptyset$, $\|(X,t)\|\le \kappa/4$ and $t<2/K$.  We claim that then we have
\begin{equation}\label{eq5.3}
\fiint_{J_{1/8}(X,t)}|\tilde w|\le C \|\wt N(\nabla w\1_{B_{\kappa}(0)\times (-\infty,2/K)})\|_{L^1(\partial (r^{-1}\tilde\Omega))},
\end{equation}

and hence by interior H\"older regularity of $\tilde w$
\begin{equation}\label{eq5.3a}
\sup_{J_{1/16}(X,t)}|\tilde w| \le C \|\wt N(\nabla w\1_{B_{\kappa}(0)\times (-\infty,2/K)})\|_{L^1(\partial (r^{-1}\tilde\Omega))}.
\end{equation}
To see \eqref{eq5.3} consider first points with $\|(X,t)\|\le \kappa/2$ and $t\le 2/K$. For such points every $(Y,s)\in J_{1/4}(X,t)\cap r^{-1}\tilde\Omega$ lies in the cone $\Gamma_a(q,s)$ of the boundary point $(q,s)\in\Delta_{2\kappa}(0,0)$ realizing $\delta(Y,s)$ (for points on the reflected side one first applies the reflection $\rho$, at the cost of a constant depending on the Lipschitz character), and the corresponding averaging ball is contained in $B_{\kappa}(0)\times(-\infty,2/K)$. Decomposing $J_{1/4}(X,t)$ into Whitney-type regions, each of which is seen by a set of boundary points of $\Delta_{2\kappa}(0,0)$ of comparable measure on which $\wt N(\nabla w \1_{B_{\kappa}(0)\times (-\infty,2/K)})$ dominates the corresponding averages of $|\nabla\tilde w|$, we obtain
\begin{equation}\label{eq5.4}
\fiint_{J_{1/4}(X,t)}|\nabla \tilde w|\lesssim \int_{\Delta_{2\kappa}(0,0)}\wt N(\nabla w \1_{B_{\kappa}(0)\times (-\infty,2/K)})\,d\sigma\le C \|\wt N(\nabla w\1_{B_{\kappa}(0)\times (-\infty,2/K)})\|_{L^1(\partial (r^{-1}\tilde\Omega))}.
\end{equation}
Then by interior Poincar\'e's inequality for $\tilde w$ (Lemma \ref{lem.Pnc_sol})
$$\fiint_{J_{1/8}(X,t)}|\tilde w-c_{\tilde{w}}|\le C \|\wt N(\nabla w\1_{B_{\kappa}(0)\times (-\infty,2/K)})\|_{L^1(\partial (r^{-1}\tilde\Omega))},$$
where $c_{\tilde{w}}$ denotes the average of $\tilde{w}$ in $J_{1/8}(X,t)$. Clearly, if we can take $c_{\tilde{w}}=0$ the claim would follow. We know however that this holds for $t<-1/K$ as $\tilde{w}$ is zero there.

To remove the constant $c_{\tilde w}$ we chain the estimate along a sequence of overlapping regions (see also \cite{DLP3} and the discussion following (5.5) there), which we reproduce here for completeness. Simple geometric considerations show that there exist $N=N(n)$ and a chain of regions $J_{1/4}(X_1,t_1),J_{1/4}(X_2,t_2),\dots,J_{1/4}(X_k,t_k)$ with $k\le N$ such that
\begin{itemize}
\item the consecutive regions $J_{1/4}(X_i,t_i)$ and $J_{1/4}(X_{i+1},t_{i+1})$ overlap for $i=1,\dots,k-1$;
\item $J_{1/4}(X_i,t_i)\cap Q_1(0,0)=\emptyset$ and $J_{1/4}(X_i,t_i)\subset\{t\le 2/K\}$ for all $i$;
\item $\|(X_i,t_i)\|\le\kappa/2$ for all $i$, so that \eqref{eq5.4} applies to each $J_{1/4}(X_i,t_i)$;
\item for each $i$ either $t_i=t_{i+1}$ or $X_i=X_{i+1}$;
\item $t_1<-1/K$ and $(X_k,t_k)=(X,t)$.
\end{itemize}
The last condition forces the average of $\tilde w$ over $J_{1/8}(X_1,t_1)$ to vanish, because $\tilde w\equiv 0$ for $t<-1/K$. It therefore suffices to bound the difference of the averages of $\tilde w$ over consecutive $J_{1/8}(X_i,t_i)$ and $J_{1/8}(X_{i+1},t_{i+1})$ by the right-hand side of \eqref{eq5.4}. When $t_i=t_{i+1}$ this is immediate: expressing the difference of the two values of $\tilde w$ at points with equal time coordinate by the fundamental theorem of calculus, integrated along the horizontal segment joining them, bounds it by the average of $|\nabla\tilde w|$ over the convex hull of the two regions, hence by \eqref{eq5.4}. When $X_i=X_{i+1}$ but $t_i\ne t_{i+1}$ one instead uses the equation satisfied by $\tilde w$ to trade the increment in time for a spatial gradient; this is carried out in full detail in \cite{DLP3}, and rests on the same mechanism as the interior Poincar\'e inequality for solutions (Lemma \ref{lem.Pnc_sol}), where an increment in time is likewise controlled by a spatial gradient by means of the equation. Chaining these at most $N$ differences of averages, and using that $N$ depends only on $n$, we obtain $\fiint_{J_{1/8}(X,t)}|\tilde w|\le CN\,\|\wt N(\nabla w\1_{B_{\kappa}(0)\times (-\infty,2/K)})\|_{L^1(\partial (r^{-1}\tilde\Omega))}$, which is \eqref{eq5.3}. Hence \eqref{eq5.3} holds for all points $(X,t)$ outside of $Q_1(0,0)$
with $\|(X,t)\|\le \kappa/4$ and $t\le 2/K$.

Boundedness for the remaining points (i.e. those with $\|(X,t)\|\ge \kappa/4$) follows by the maximum principle for the parabolic operator $\partial_t-\divg(\dbtilde A\nabla\cdot)$ with bounded measurable coefficients (see e.g. \cite[Ch.~2]{Lieberman}), applied on the region $\{\|(X,s)\|>\kappa/4\}$. Indeed, since $Q_1(0,0)\subset\{\|(X,s)\|<\kappa/4\}$, on this region $\tilde w$ (defined on both sides of the graph by the reflection) solves the PDE, it vanishes for $s<-1/K$, and its values on the lateral boundary $\{\|(X,s)\|=\kappa/4\}$ are bounded by the right-hand side of \eqref{eq5.3a}. Some care is needed because the region is unbounded, so that the classical comparison principle does not apply directly; however $\tilde w\in\dot\E$ locally, and the finiteness of the energy forces the averaged decay $\fiint_{J_1(X,t)}|\tilde w|^2\to0$ as $|X|\to\infty$ (uniformly for $t$ in compact sets; this follows by chaining backwards in time to $\{t<-1/K\}$ exactly as in the proof of \eqref{eq5.3}, with the right-hand side now the tail energy of $\nabla w$), which rules out growth at infinity and legitimizes the comparison principle through a standard Phragm\'en--Lindel\"of exhaustion by bounded subdomains (cf. the growth-restricted maximum principles in \cite[Ch.~2]{Lieberman}). Hence
$$|\tilde{w}(X,t)|\le C\|\wt N(\nabla w\1_{B_{\kappa}(0)\times (-\infty,2/K)})\|_{L^1(\partial (r^{-1}\tilde\Omega))}$$
on the complement of any enlargement of $Q_1(0,0)$ with $t<2/K$.\medskip

Consider now a smooth cutoff function $\eta$ which vanishes on $Q_1(0,0)$ and is equal to $1$ outside of $Q_2(0,0)$. It follows that $\eta\tilde{w}$ is bounded on $\R^n\times (-\infty,2/K)$ and hence for any $c>0$ and any bounded time interval $(a,b)$, $b\le 2/K$ we have that
$$\int_a^b\int_{\R^n}|(\eta\tilde{w})(X,t)|^2e^{-c|X|^2}dX\,dt<\infty,$$
which then implies the integrability also on the interval $(-\infty,2/K)$ as $\tilde w$ vanishes when $t<-1/K$.
Recall  Aronson’s bound for the fundamental 
solution (\cite{Aro68}), denoted by $E(X,t,Y,s)$:
\begin{equation}\label{KE}
|E(X,t,Y,s)|\le C\chi_{(0,\infty)}(t-s)(t-s)^{-n/2}e^{-c|X-Y|^2/(t-s)},
\end{equation}
for some $c>0$. Here $E$ is the fundamental solution of the operator $\partial_t-\divg(\dbtilde A\nabla\cdot)$, where the matrix $\dbtilde A$ is extended inside $\overline{Q_1(0,0)}$ by the identity (the choice of this extension is immaterial below, as $\eta\tilde w$ vanishes there). Also $\int_{\R^n} E(X,t,Y,s)dY=1$ for all $(X,t)\in\R^n\times\R$ and $s<t$.

It follows that $(\eta\tilde w)$ can be written as
\begin{multline*}
\eta\tilde w(X,t)=\iint_{\R^n\times\R}E(X,t,Y,s)\left[\tilde w(Y,s)\partial_s\eta(Y,s)-\langle \dbtilde A(Y,s)\nabla\tilde w(Y,s),\nabla\eta(Y,s)\rangle \right]dYds\\+\iint_{\R^n\times\R}
\tilde w(Y,s)\langle \dbtilde A(Y,s)\nabla \eta(Y,s),\nabla_YE(X,t,Y,s)\rangle dYds.
\end{multline*}
Hence using \eqref{KE} and Caccioppoli's inequality for the term involving $\nabla E$, it follows (recalling that $\eta\tilde w=\tilde w$ outside $Q_2(0,0)$) that for some small $c>0$
\begin{equation}\label{decay}
|\tilde w(X,t)|\le Ce^{-c|X|^2},\qquad \mbox{for all $(X,t)\notin Q_2(0,0)$ with }t<2/K.
\end{equation}
Here the constant $C$ in the above estimate is a multiple of $\|\wt N(\nabla w\1_{B_{\kappa}(0)\times (-\infty,2/K)})\|_{L^1(\partial (r^{-1}\tilde\Omega))}$. The decay we have picked up in \eqref{decay} is crucial. 
It enables us to estimate $\|\wt N(\nabla w \1_{t<2/K}) \|_{L^1(S_k)}$ on a sequence of
annuli $S_k$ partitioning the complement of the set $Q_{1}(0,0)\cap\partial(r^{-1}\tilde\Omega)$.
$$S_k=\{(X,t)\in \partial(r^{-1}\tilde\Omega): 2^k<\|(X,t)\|\le 2^{k+1}\},\qquad\mbox{for } k=1,2,\dots.$$

We omit the proof here, since \eqref{decay} provides exponential (Gaussian) decay, which is faster than the polynomial one; the analogous but more delicate estimate under the weaker polynomial decay \eqref{decay2} (of order $-n-\gamma$) is carried out in full detail below (see the calculation leading to \eqref{Est-w2}).
The only difference from that calculation is that here we stop at the scale $2^{k_0}\sim 2r^{-1}$, since the rescaled PDEs for $w$
and $u$ coincide up to the scale $[4r^{-1}\Z_1\cap r^{-1}\mathcal O]\times \R$. It then follows:

\begin{multline}\label{eq5.8}
\|\wt N(\nabla w\1_{t<2/K}) \|_{L^1((2r^{-1}\Z_1\times (-2r^{-2},2r^{-2}))\cap \partial(r^{-1}\tilde\Omega))}\\\le 
\|\wt N(\nabla w\1_{B_{\kappa}(0)\times (-\infty,2/K)})\|_{L^1(\partial (r^{-1}\tilde\Omega))}+\sum_{k=1}^{k_0}
\|\wt N(\nabla w\1_{t<2/K}) \|_{L^1(S_k)}\\
\le C\|\wt N(\nabla w\1_{B_{\kappa}(0)\times (-\infty,2/K)})\|_{L^1(\partial (r^{-1}\tilde\Omega))}\lesssim_{c_0,C}
C_K+ \iint_{(B_{2\kappa}(0)\times (-1/K,2/K))\cap \wt{r^{-1}\Omega}_\varepsilon} |\nabla w| dXdt,
\end{multline}
where the last inequality follows from \eqref{B1}. The implied constants in the estimate above do not depend on the scale $r>0$, and only depend on the ellipticity constants, the Lipschitz constant of the graph and the constants in the estimates for the 
\Np$^{L}$ and \Dq$^{L^*}$ problems. With the exception of the constant $C_K$, the remaining constants do not depend on $K$ (which is not yet chosen). \medskip

Our next objective is to establish estimates for the last term on the righthand side of \eqref{eq5.8}.
For an ease of notation, let 
\begin{align*}
S_\varepsilon&=B_{2\kappa}(0)\times (2/K-4\kappa^2,2/K)\cap \wt{r^{-1}\Omega}_\varepsilon,\\
\wt S_\varepsilon&=B_{2\kappa+1}(0)\times (2/K-(2\kappa+1)^2,2/K)\cap \wt{r^{-1}\Omega}_{\varepsilon/2},\\
\dbtilde S_\varepsilon&=B_{2\kappa+1}(0)\times (2/K-(2\kappa+1)^2,2/K)\cap \wt{r^{-1}\Omega}_{\varepsilon/4},\\
\trtilde S_\varepsilon&=B_{2\kappa+2}(0)\times (2/K-(2\kappa+2)^2,2/K)\cap \wt{r^{-1}\Omega}_{\varepsilon/5}.
\end{align*}
Clearly as $w=0$ for $t<-1/K$, the integral of $\nabla w$ over $S_\varepsilon$ is equal to the last term on the righthand side of \eqref{eq5.8}. We can think about this term as an interior parabolic \lq\lq cube" 
whose distance to the boundary is comparable to $\varepsilon$. Hence by Lemma \ref{L:Caccio} and H\"older
it follows that
\begin{equation}\label{rex}
\iint_{(B_{2\kappa}(0)\times (-1/K,2/K))\cap \wt{r^{-1}\Omega}_{\varepsilon}} |\nabla w| dXdt
\lesssim C\varepsilon^{-1}\left(\iint_{\wt S_\varepsilon}w^2\right)^{1/2}.
\end{equation}
Recall that since $w=0$ for $t<-1/K$, by the fundamental theorem of calculus (integrating in the $t$-variable and using Fubini's theorem)
we have that for a fixed $X$:
$$\int_{-\infty}^{2/K} w^2(X,t)dt= 2\int_{-\infty}^{2/K} (2/K-s)\,\partial_s w(X,s)w(X,s)\, ds,$$
where the weight satisfies $0\le 2/K-s\le 3/K$ on the time interval where $w$ does not vanish, whose length is proportional to $1/K$.
Let $\varphi$ be a smooth nonnegative cutoff function in $X$ variables only (constant in $t$) such that $\varphi=1$ on 
$\wt S_\varepsilon$ and $\varphi$ vanishes outside of $\dbtilde S_\varepsilon$ and $|\nabla\varphi|\lesssim \varepsilon^{-1}$.  Then using the equation above we have that
\begin{multline}\label{rex2}
\iint_{\wt S_\varepsilon}w^2 dXdt \le \iint_{\dbtilde S_\varepsilon}w^2(X,t)\varphi^2(X) dXdt
= 2\iint_{\dbtilde S_\varepsilon}(2/K-t)\,\partial_tw(X,t)w(X,t)\varphi^2(X) dXdt\\
=2\iint_{\dbtilde S_\varepsilon}(2/K-t)\,\divg(\tilde A\nabla w)(X,t)w(X,t)\varphi^2(X) dXdt\\
=-2\iint_{\dbtilde S_\varepsilon}(2/K-t)\,\tilde{A}\nabla w(X,t)\cdot\nabla w(X,t)\varphi^2(X) dXdt\\
-2\iint_{\dbtilde S_\varepsilon}(2/K-t)\, \tilde A\nabla w(X,t)\cdot\nabla(\varphi^2(X))w(X,t)dXdt\\
\le C/K\iint_{\dbtilde S_\varepsilon} |\nabla w||\nabla\varphi||w||\varphi|dXdt
\le 1/2 \iint_{\dbtilde S_\varepsilon}w^2\varphi^2 dXdt+C/(\varepsilon K)^2 \iint_{\dbtilde S_\varepsilon}|\nabla w|^2dX dt.
\end{multline}
Here in the penultimate inequality we have dropped the term containing $\tilde{A}\nabla w\cdot\nabla w\,\varphi^2\ge 0$ (as it appears with a negative sign) and used that $0\le 2/K-t\le 3/K$ on the support of $w$. Hence after hiding the penultimate term we get that
$$\iint_{\wt S_\varepsilon}w^2 dXdt \le \iint_{\dbtilde S_\varepsilon}w^2\varphi^2 dXdt
\le 2C/(\varepsilon K)^2 \iint_{\dbtilde S_\varepsilon}|\nabla w|^2dX dt.$$
We note that to derive this inequality we worked with $\partial_t w$ which for a general parabolic PDE is only a distribution, but the calculation above can be made rigorous by approximating the coefficients by smooth
ones and a limiting argument. Finally, by using reverse H\"older inequality for the gradient of a solution (which is a consequence of Lemma \ref{L:Caccio}) we get that
$$\left(\iint_{\wt S_\varepsilon}w^2 dXdt \right)^{1/2}
\le C_\varepsilon/ K \iint_{\,\,\trtilde S_\varepsilon}|\nabla w|dX dt.$$
By combining the estimate above with \eqref{rex} and \eqref{eq5.8} we finally get that
\begin{multline}\label{eq5.11}
\|\wt N(\nabla w\1_{t<2/K}) \|_{L^1((2r^{-1}\Z_1\times (-2r^{-2},2r^{-2}))\cap \partial(r^{-1}\tilde\Omega))}\\\le
C_K+ C_{\varepsilon}K^{-1}
\iint_{(B_{2\kappa+2}(0)\times (-\infty,2/K))\cap \wt{r^{-1}\Omega}_{\varepsilon/5}} |\nabla w| dXdt.
\end{multline}
The final observation is that since 
$$\iint_{(B_{2\kappa+2}(0)\times (-\infty,2/K))\cap \wt{r^{-1}\Omega}_{\varepsilon/5}} |\nabla w| dXdt
\le C \|\wt N(\nabla w\1_{t<2/K}) \|_{L^1((2r^{-1}\Z_1\times (-2r^{-2},2r^{-2}))\cap \partial(r^{-1}\tilde\Omega))},$$
we see that
\begin{multline}\label{eq5.11a}
\|\wt N(\nabla w\1_{t<2/K}) \|_{L^1((2r^{-1}\Z_1\times (-2r^{-2},2r^{-2}))\cap \partial(r^{-1}\tilde\Omega))}\\\le
C_K+ \tilde C_{\varepsilon}K^{-1}
\|\wt N(\nabla w\1_{t<2/K}) \|_{L^1((2r^{-1}\Z_1\times (-2r^{-2},2r^{-2}))\cap \partial(r^{-1}\tilde\Omega))}.
\end{multline}
The absorbing coefficient in \eqref{eq5.11a} is $\tilde C_{\varepsilon}K^{-1}$, and since $\tilde C_\varepsilon$ may itself be large we must take $K$ correspondingly large, namely $K\ge 2\tilde C_\varepsilon$, so that $\tilde C_\varepsilon K^{-1}\le 1/2$. With such a choice of $K$ we get that
\begin{equation}
\label{Est-w1}
\|\wt N(\nabla w\1_{t<2/K}) \|_{L^1((2r^{-1}\Z_1\times (-2r^{-2},2r^{-2}))\cap \partial(r^{-1}\tilde\Omega))}\le 2C_K.
\end{equation}
This is a \lq\lq near final" version of our estimate for $w$. We still need to take care of estimates for $\nabla w$ for $t\ge 2/K$ to obtain an analogue of the estimate \eqref{Est-w1} without this restriction on the time interval. With \eqref{Est-w1} in hand we revisit our construction we gave for $\tilde w$.\medskip

Recall, that we have constructed $\tilde{w}$ by reflection (excluding the parabolic ball $Q_1(0,0)$).
Our objective previously was to establish bounds that do not depend on $K$ and then choose $K$ sufficiently large so that a key term (as seen above) can be absorbed by the left-hand side. With this done and $K$ fixed permanently we now can observe that the Neumann datum $g$ is in fact supported in $(Q_1(0)\times (-1/K,1/K))\cap\partial\tilde\Omega$. Hence we can do an even reflection of $w$ and obtain a function defined
on the set $\R^{n+1}\setminus \overline{Q_1(0)\times (-1/K,1/K)}$. We shall still call this function $\tilde w$ as it is an extension to a larger set of the previously defined $\tilde w$.

We now claim that our function $\tilde w$ is also bounded for $t>1/K$. To see this we claim that a better version of \eqref{eq5.3} now holds, namely that
\begin{equation}\label{eq5.3b}
\sup_{J_{1/(16K)}(X,t)}|\tilde w|\lesssim \fiint_{J_{1/(8K)}(X,t)}|\tilde w|\le C \|\wt N(\nabla w\1_{t<2/K}) \|_{L^1((2r^{-1}\Z_1\times (-2r^{-2},2r^{-2}))\cap \partial(r^{-1}\tilde\Omega))},
\end{equation}
for all points $(X,t)$ such that  $J_{1/(4K)}(X,t)\cap (Q_{1}(0)\times (-1/K,1/K)))=\emptyset$.  
The proof is analogous to the one given above, where once again we first prove the claim for
the points such that $\|(X,t)\|\le O(\kappa)$ and $t<2/K$, use the maximum principle (justified as before, cf. \cite{Lieberman}) to get it for all points
$(X,t)\in \R^{n+1}\setminus \overline{Q_1(0)\times (-1/K,1/K)}$ with $t<2/K$ and then since the set
$\overline{Q_1(0)\times (-1/K,1/K)}$ does not meet the hyperplane $t=2/K$ we already have boundedness of the solution $\tilde w$ on this hyperplane. Hence, we can use the maximum principle again for further times ($t\ge 2/K$). We obtain that outside of any open enlargement of the set
$\overline{Q_1(0)\times (-1/K,1/K)}$ there holds
$$|\tilde w(X,t)|\le C_K  \|\wt N(\nabla w\1_{t<2/K}) \|_{L^1((2r^{-1}\Z_1\times (-2r^{-2},2r^{-2}))\cap \partial(r^{-1}\tilde\Omega))}\le \tilde C_K.$$
Note the dependence on $K$ which explains the two-step process. Modifying now the cutoff function $\eta$
and again using \eqref{KE} and the representation of $\eta\tilde w$ we obtain an analogue of \eqref{decay}
\begin{equation}\label{decay-global}
|\tilde w(X,t)|\le Ce^{-c|X|^2},\qquad \mbox{for all $(X,t)\notin Q_2(0,0)$ with }t<20,
\end{equation}
for some small $c>0$. The value $20$ has no special significance here: the representation formula via \eqref{KE} yields Gaussian decay in the spatial variable up to any fixed finite time (with constants depending on that time), and any sufficiently large value works for the argument below.

To pass from the Gaussian bound \eqref{decay-global} to the polynomial decay \eqref{decay2} we follow \cite{DLP3}; we reproduce the argument here for the reader's convenience. First we record that $\tilde w$ has vanishing spatial average on a fixed time slice. Fix $t_*\in (1/K,20)$. Using that $w\equiv 0$ for $t<-1/K$, the equation $\partial_t w=\divg(\tilde A\nabla w)$, the divergence theorem in the (time-independent) base $\{x_n>\Phi(x)\}$ of $r^{-1}\tilde\Omega$ (the flux at spatial infinity vanishing thanks to the Gaussian bound \eqref{decay-global}), and $\partial^{\tilde A}_\nu w=g$ with $\int g=0$, we obtain
$$\int_{\{x_n>\Phi(x)\}} w(X,t_*)\,dX=\int_{-1/K}^{t_*}\!\!\int_{\{x_n>\Phi(x)\}}\!\partial_t w\,dXdt=\int_{-1/K}^{t_*}\!\!\int_{\partial (r^{-1}\tilde\Omega)}\!\partial^{\tilde A}_\nu w\,d\sigma\,dt=\int g=0.$$
Since $\tilde w$ is the even reflection \eqref{eqdef.reflecw} of $w$ and $|\det D\rho|=1$, it follows that $\int_{\R^n}\tilde w(X,t_*)\,dX=0$ as well. Moreover, for $t\ge t_*$ the excluded set $\overline{Q_1(0)\times(-1/K,1/K)}$ lies entirely in the past, so $\tilde w$ solves the clean initial value problem $\partial_t\tilde w-\divg(\dbtilde A\nabla\tilde w)=0$ on all of $\R^n\times(t_*,\infty)$ with initial datum $\tilde w(\cdot,t_*)$. The claim \eqref{decay2} for $t\ge t_*$ is therefore a consequence of the following elementary decay lemma, applied after the shift $t\mapsto t-t_*$; for $t\le t_*$ it is already contained in \eqref{decay-global}.

\begin{lemma}\label{lem.decay}
Let $\dbtilde A$ be bounded and uniformly elliptic, and let $f:\R^n\to\R$ satisfy, for some small $\gamma>0$,
$$|f(X)|\le C_0(1+|X|)^{-n-2\gamma}\qquad\text{and}\qquad \int_{\R^n}f(X)\,dX=0.$$
Then the solution $U$ of $\partial_t U-\divg(\dbtilde A\nabla U)=0$ in $\R^n\times(0,\infty)$ with $U(\cdot,0)=f$ satisfies, with a constant $C=C(n,\lambda,\Lambda,\gamma,C_0)$,
$$|U(X,t)|\le C(1+\|(X,t)\|)^{-n-\gamma}\qquad\text{for all }t>0.$$
\end{lemma}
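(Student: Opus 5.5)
The plan is to use the representation $U(X,t)=\int_{\R^n}E(X,t,Y,0)f(Y)\,dY$ with the fundamental solution $E$ of $\partial_t-\divg(\dbtilde A\nabla\cdot)$, together with Aronson's bound \eqref{KE}. We also need the H\"older continuity of $E$ in the pole variable that follows from Nash--De Giorgi--Moser: for $|Y-Y'|\le \sqrt t$,
$$|E(X,t,Y,0)-E(X,t,Y',0)|\le C\Big(\frac{|Y-Y'|}{\sqrt t}\Big)^{\mu}t^{-n/2}e^{-c|X-Y|^2/t}$$
for some $\mu=\mu(n,\lambda,\Lambda)\in(0,1)$. This bound is obtained by applying interior H\"older regularity to $E(X,t,\cdot,\cdot)$, which solves the adjoint equation in $(Y,s)$ for $s<t$, and combining it with \eqref{KE}. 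We are free to shrink $\gamma$, so we assume $2\gamma\le\mu$.

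We split according to $t$ versus $1+|X|^2$. In the regime $t\le 1+|X|^2$ the mean zero condition is not needed. Write $|U(X,t)|\le\int E(X,t,Y,0)|f(Y)|dY$ and split the $Y$-integral into $|Y|\ge |X|/2$ and $|Y|<|X|/2$. On $|Y|\ge|X|/2$ we use $\int E\,dY=1$ and $|f(Y)|\lesssim(1+|X|)^{-n-2\gamma}$. On $|Y|<|X|/2$ we have $|X-Y|\ge |X|/2$, so
$$E\lesssim t^{-n/2}e^{-c|X|^2/t}\lesssim |X|^{-n}$$
uniformly in $t$. Since $f\in L^1$, this piece is $O(|X|^{-n})$; when $t\le |X|$ it is in fact much smaller, because of the Gaussian. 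The clean approach is to note $t^{-n/2}e^{-c|X|^2/t}\lesssim (|X|^2)^{-n/2-\gamma}\,t^{\gamma}$. Then for $t\le 1+|X|^2$ this is bounded by $(1+|X|)^{-n}$, which is not quite enough. So on $|Y|<|X|/2$ we instead use the mean zero of $f$ here as well (see below), or accept the cruder bound $(1+|X|)^{-n-\gamma}$ after absorbing. Since $\|(X,t)\|\sim 1+|X|$ in this regime, any bound $(1+|X|)^{-n-\gamma}$ suffices.

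In the regime $t\ge 1+|X|^2$ we have $\|(X,t)\|\sim\sqrt t$, and the mean zero condition gives
$$U(X,t)=\int\big(E(X,t,Y,0)-E(X,t,0,0)\big)f(Y)\,dY .$$
For $|Y|\le\sqrt t$ the H\"older bound gives a contribution
$$\lesssim t^{-n/2-\mu/2}\int_{|Y|\le\sqrt t}|Y|^{\mu}(1+|Y|)^{-n-2\gamma}dY\lesssim t^{-n/2-\mu/2}\,t^{(\mu-2\gamma)/2}=t^{-n/2-\gamma}.$$
For $|Y|>\sqrt t$ we bound each term by $t^{-n/2}$ times $\int_{|Y|>\sqrt t}|f|\lesssim t^{-\gamma}$. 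Together this gives $|U|\lesssim t^{-(n+2\gamma)/2}\le (1+\|(X,t)\|)^{-n-\gamma}$.

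The same subtraction trick handles the sub-case left open in the first regime. When $t\le1+|X|^2$ and $|Y|<|X|/2$, subtract $E(X,t,0,0)$. If $|Y|\le\sqrt t$ we use the H\"older bound; otherwise we use the size bound. Every factor carries $e^{-c|X|^2/t}$, and this gives the bound
$$t^{-n/2}e^{-c|X|^2/t}\min\big(1,(|Y|/\sqrt t)^{\mu}\big)\lesssim |X|^{-n-\mu}(1+|Y|)^{\mu}.$$
Integrating against $|f|$ gives $|X|^{-n-\mu}\int_{|Y|<|X|}|Y|^{\mu}(1+|Y|)^{-n-2\gamma}dY\lesssim |X|^{-n-2\gamma}$, and the remaining subtracted term is $E(X,t,0,0)\int_{|Y|<|X|/2}f=-E(X,t,0,0)\int_{|Y|\ge|X|/2}f$, which is $O(|X|^{-n}\cdot|X|^{-2\gamma})$. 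The main obstacle is this near-diagonal regime, where the mean zero cancellation must be combined with the Gaussian factor; the H\"older continuity of $E$ in the pole, with exponent at least $2\gamma$, is the key input. Bounded $t$ and bounded $|X|$ are trivial by $|U|\le\sup|f|$.
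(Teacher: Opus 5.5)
Your argument is correct and follows essentially the route the paper sketches (and defers to \cite{DLP3}): the representation $U=\int E(X,t,Y,0)f(Y)\,dY$, a near/far splitting in $Y$, and the cancellation $\int f=0$ exploited by subtracting $E(X,t,0,0)$, combined with the Aronson bounds \eqref{KE}. The one ingredient you make explicit that the paper's sketch leaves implicit is the H\"older continuity of $E$ in the pole with a Gaussian factor, which is what actually turns the cancellation into extra decay. Two minor fixes: the aside about ``absorbing'' in your first regime is unjustified and should be deleted, since your subtraction paragraph supersedes it; and taking $2\gamma<\mu$ strictly avoids a harmless logarithm when $\mu=2\gamma$.
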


\begin{proof}
We omit the proof; it splits the representation $U(X,t)=\int_{\R^n}E(X,t,Y,0)f(Y)\,dY$ into near-field and far-field parts and combines the vanishing average of $f$ with the Aronson bounds \eqref{KE}. See Lemma~5.7 of \cite{DLP3} for the details (cf.\ also Lemma~3.13 of \cite{B}).
\end{proof}

Applying Lemma~\ref{lem.decay} with $f=\tilde w(\cdot,t_*)$, whose hypotheses hold by \eqref{decay-global} together with the boundedness of $\tilde w(\cdot,t_*)$ on $Q_2(0,0)$ established above (so that $|f|\le C_0(1+|X|)^{-n-2\gamma}$ with $C_0=C_0(K)$) and the zero-average property just established, we conclude that for some small $\gamma>0$
\begin{equation}\label{decay2}
|\tilde w(X,t)|\le C\|(X,t)\|^{-n-\gamma},\qquad \mbox{for all $(X,t)\notin Q_2(0,0)$}.
\end{equation}
With the polynomial decay \eqref{decay2} in hand we can now estimate $\|\wt N(\nabla w)\|_{L^1(S_k)}$ over the same annuli $S_k$ as before, but this time \emph{without} the temporal cutoff $\1_{t<2/K}$. We reproduce the corresponding argument of \cite{DLP3} in the present setting. Close to the datum, H\"older's inequality and the solvability of \Np$^L$ for the rescaled problem give
$$\|\wt N(\nabla w) \|_{L^1(Q_{8\kappa}(0,0)\cap \partial(r^{-1}\tilde\Omega))}\le |Q_{8\kappa}(0,0)\cap\partial(r^{-1}\tilde\Omega)|^{1/p'}\Big(\int|g|^p\Big)^{1/p}\le C_K.$$
For $k$ with $8\kappa\le 2^k\le 2^{k_0}$ (recall we stop at $2^{k_0}\sim 2r^{-1}$; the annuli with $2^k<8\kappa$ lie in $Q_{8\kappa}(0,0)$ and are already covered by the near-field bound) we cover $S_k$ by $N$ boundary cubes $Q^{k,j}_{r_k}\cap\partial(r^{-1}\tilde\Omega)$ of size $r_k=c2^k$, $j=1,\dots,N$, with $N$ independent of $k$ and $d(Q^{k,j}_{8r_k},Q_1(0,0))\gtrsim 2^k$. Since $\partial^{\tilde A}_\nu w=0$ on $8Q^{k,j}_{r_k}\cap\partial(r^{-1}\tilde\Omega)$, Theorem~\ref{thm.NtoLoc} followed by the boundary Caccioppoli inequality (Lemma~\ref{L:Caccio}) gives, for the truncated maximal function $\wt N^{r_k}$ (defined as $\wt N$ but with the non-tangential cones truncated at parabolic height $r_k$),
\begin{multline*}
\|\wt N^{r_k}(\nabla w) \|_{L^p(Q^{k,j}_{r_k}\cap \partial(r^{-1}\tilde\Omega))}\le \|\wt N(\nabla w\1_{2Q^{k,j}_{r_k}}) \|_{L^p(\partial(r^{-1}\tilde\Omega))}\\\le Cr_k^{(n+1)/p}\Big(\fiint_{4Q^{k,j}_{r_k}\cap r^{-1}\tilde\Omega}|\nabla w|^2\Big)^{1/2}\le Cr_k^{-1+(n+1)/p}\Big(\fiint_{8Q^{k,j}_{r_k}\cap r^{-1}\tilde\Omega}|w|^2\Big)^{1/2}.
\end{multline*}
As every point of $8Q^{k,j}_{r_k}$ has $\|(X,t)\|\gtrsim 2^k$, the polynomial bound \eqref{decay2} (recall that $w=\tilde w$ there) gives $|w|\le Cr_k^{-n-\gamma}$, whence
$$\|\wt N^{r_k}(\nabla w) \|_{L^p(Q^{k,j}_{r_k}\cap \partial(r^{-1}\tilde\Omega))}\le Cr_k^{-1+(n+1)/p}\,r_k^{-n-\gamma}=Cr_k^{-(n+1)/p'-\gamma}.$$
Summing over $j$ by H\"older's inequality,
$$\|\wt N^{r_k}(\nabla w) \|_{L^1(S_k)}\le CN\,|Q^{k,j}_{r_k}\cap \partial(r^{-1}\tilde\Omega)|^{1/p'}\|\wt N^{r_k}(\nabla w) \|_{L^p(Q^{k,j}_{r_k}\cap \partial(r^{-1}\tilde\Omega))}\lesssim r_k^{(n+1)/p'}r_k^{-(n+1)/p'-\gamma}\sim r_k^{-\gamma}.$$
The contribution to $\wt N(\nabla w)$ of the interior part of the cones (points $(Y,s)$ with $\delta(Y,s)\ge r_k$) is estimated in exactly the same way, using the interior Caccioppoli inequality and \eqref{decay2}, and is again $O(r_k^{-\gamma})$ on $S_k$. Summing the resulting geometric series over the remaining $k\le k_0$ and adding the near-field bound, we obtain
\begin{equation}
\label{Est-w2}
\|\wt N(\nabla w) \|_{L^1((2r^{-1}\Z_1\times (-2r^{-2},2r^{-2}))\cap \partial(r^{-1}\tilde\Omega))}\le C_K.
\end{equation}
We now undo the parabolic rescaling we performed earlier and shrink the set 
$(2r^{-1}\Z_1\times (-2r^{-2},2r^{-2}))\cap \partial(r^{-1}\tilde\Omega)$ back to the original set $(2\Z_1\times (-2,2))\cap \partial\tilde\Omega$. Hence for our original (non-rescaled $w$) we get that
\begin{equation}\label{Est-w3}
\|\wt N(\nabla w) \|_{L^1((2\Z_1\times (-2,2))\cap \partial\tilde\Omega)}\le C_K.
\end{equation}

\subsection{Small atoms, part 2}

Recall now that $u$ solves $Lu=0$ in $\Omega$ with Neumann data $g$ (same as $w$) and that we have already established \eqref{eq5.5} in the part 1. However, the PDEs for $u$ and $w$ coincide on the set
$(4\Z_1\times\R)\cap \Omega$ and therefore we may apply Theorem \ref{thm.NtoLoc} for the difference $u-w$
on the set $J_2\cap\Omega$, where
$$J_1= \Z_1\times (2/K-1,2/K),\qquad 2J_1=2\Z_1\times (-2,2/K).$$
Here we quietly assume that $K\gg 1$ so that $2/K-1<-1/K$.
It follows that
\begin{equation*} 
\|\wt N(\nabla (u-w)\1_{J_1})\|_{L^1(\partial \Omega)} \lesssim \|\wt N(\nabla (u-w)\1_{J_1})\|_{L^p(\partial \Omega)} \leq C \iint_{2J_{1}\cap \om} |\nabla(u-w)| dXdt.
\end{equation*}
Here the implied constant in the first estimate is of size $O(1)$ as we are working on a cube of size $\sim 1$.
Using the bound \eqref{Est-w3} we have obtained for $w$ we can now simplify the estimate further to obtain
\begin{equation} \label{u-est1}
\|\wt N(\nabla u\1_{J_1})\|_{L^1(\partial \Omega)} \leq C_K+ C\iint_{(2\Z_1\times(-1/K,2/K))\cap \om} |\nabla u| dXdt,
\end{equation}
where we use for the last integral the fact that when $t<-1/K$ then $\nabla u\equiv 0$.
We can repeat the calculation for other sets $J_i= \Z_i\times (2/K-1,2/K)$, $i=2,3,\dots, N$, this time applying Theorem \ref{thm.NtoLoc} to the pair of cubes $\Z_i\times (2/K-1,2/K)$ and $2\Z_i\times (-2,2/K)$. We distinguish two cases, according to whether the support of $g$ meets the larger of the two cubes.

If $\mbox{supp }g\cap [2\Z_i\times (-1/K,1/K)]=\emptyset$, then $u$ itself has vanishing Neumann data on $[2\Z_i\times (-2,2/K)]\cap\pom$, so we can take $w=0$ and get an estimate
\begin{equation} \label{u-est1b}
\|\wt N(\nabla u\1_{J_i})\|_{L^1(\partial \Omega)} \leq  C\iint_{(2\Z_i\times(-1/K,2/K))\cap \om} |\nabla u| dXdt,
\end{equation}
where the constant $C$ does not depend on $K$.

If, on the other hand, $\mbox{supp }g\cap [2\Z_i\times (-1/K,1/K)]\ne\emptyset$, we may not take $w=0$, as the Neumann data of $u$ need not vanish on the larger cube. Instead we argue as follows. Since $\mbox{supp }g\subset Q_r(0,0)$ with $r<c_n$, the spatial projection of $\mbox{supp }g$ has diameter less than $1/2$, and as it meets $\Z_i$, it is contained in the set $\{|x'|\le 3\}$ in the coordinate system of $\Z_i$ --- well inside the window $\{|x'|\le 8\}$ on which $8\Z_i\cap\partial\mathcal O$ is the graph of the Lipschitz function $\phi_i$. Therefore the construction of the auxiliary unbounded domain and of the auxiliary solution given in the previous two subsections can be repeated with $\Z_i$ in place of $\Z_1$, with the radii $4$, $6$, $8$ used there adjusted by a fixed amount to account for the fact that the support of $g$ is no longer centered at the origin of the coordinate system (the window $\{|x'|\le 8\}$ leaves enough room for this). This produces a solution $w_i$ of an auxiliary PDE on a graph domain $\tilde\Omega_i$ with Neumann datum $g$ such that the PDEs for $u$ and $w_i$ coincide on a neighbourhood of $[2\Z_i\cap\mathcal O]\times\R$, the boundaries of $\Omega$ and $\tilde\Omega_i$ coincide on a neighbourhood of $2\Z_i\cap\partial\mathcal O$, and, by the argument leading to \eqref{Est-w3},
\begin{equation}\label{Est-wi}
\|\wt N(\nabla w_i) \|_{L^1((4\Z_i\times (-2,2))\cap \partial\tilde\Omega_i)}\le C_K.
\end{equation}
Since $\partial_\nu^{A}(u-w_i)=g-g=0$ on $[2\Z_i\times (-2,2/K)]\cap\pom$ (note that the Neumann data of the difference vanish even across the support of $g$, as $u$ and $w_i$ have the same datum there), Theorem \ref{thm.NtoLoc} applies to $u-w_i$ and, exactly as in the derivation of \eqref{u-est1}, we obtain
\begin{equation} \label{u-est1c}
\|\wt N(\nabla u\1_{\Z_i\times (2/K-1,2/K)})\|_{L^1(\partial \Omega)} \leq C_K+ C\iint_{(2\Z_i\times(-1/K,2/K))\cap \om} |\nabla u| dXdt.
\end{equation}
Observe also that the same value of $K$ can be used for every $i$, since all the implied constants in the construction of $w_i$ depend only on $n$, $\lambda$, $\Lambda$, $\mathcal O$ and the constants in the \Np$^{L}$ and \Dq$^{L^*}$ estimates, uniformly in $i$. Hence after summing over all $i=1,2,\dots, N$, using the fact that $\nabla u\equiv 0$ for $t<-1/K$ and observing that the contribution of the interior region $(\mathcal O\setminus\bigcup_i\Z_i)\times(-1/K,2/K)$ (which the sets $J_i$ do not cover) to $\wt N(\nabla u\1_{t<2/K})$ is also controlled by $C\iint_{\mathcal O\times(-1/K,2/K)}|\nabla u|\, dXdt$ (using the interior reverse H\"older inequality for the gradient of a solution), we get that
\begin{equation} \label{u-est2}
\|\wt N(\nabla u\1_{t<2/K})\|_{L^1(\partial \Omega)} \leq NC_K+ C\iint_{\mathcal O\times(-1/K,2/K)} |\nabla u| dXdt.
\end{equation}

Here only the constant $C_K$ depends on $K$. Next we introduce 
$$\mathcal O_\varepsilon=\{X\in \mathcal O:\, \delta(X)>\varepsilon\},$$
and claim that 
$$\iint_{(\mathcal O\setminus\mathcal O_\varepsilon)\times(-1/K,2/K)} |\nabla u| dXdt\lesssim \varepsilon
\|\wt N(\nabla u\1_{t<2/K})\|_{L^1(\partial \Omega)}.$$
It might be useful to compare this estimate with \eqref{zex} and after choosing $\varepsilon>0$ small
(it can be taken the same as before) we obtain
\begin{equation} \label{u-est3}
\|\wt N(\nabla u\1_{t<2/K})\|_{L^1(\partial \Omega)} \leq 2NC_K+ 2C\iint_{\mathcal O_\varepsilon\times(-1/K,2/K)} |\nabla u| dXdt.
\end{equation}
This is analogous to \eqref{eq5.8}. Unsurprisingly, we can also repeat the calculation done in \eqref{rex}--\eqref{rex2} to obtain
\begin{multline*}
\iint_{\mathcal O_\varepsilon\times(-1/K,2/K)} |\nabla u| dXdt\le C\left(
\iint_{\mathcal O_\varepsilon\times(-1/K,2/K)} |\nabla u|^2 dXdt\right)^{1/2}\\
\le C\varepsilon^{-2}K^{-1}\iint_{\mathcal O_{\varepsilon/5}\times(-1/K,2/K)} |\nabla u| dXdt.
\end{multline*}
As the last term of the estimate above can be estimated by the first term of \eqref{u-est3} it follows that
\begin{equation} \label{u-est3-pre}
\|\wt N(\nabla u\1_{t<2/K})\|_{L^1(\partial \Omega)} \leq 2NC_K+ C_\varepsilon K^{-1}\|\wt N(\nabla u\1_{t<2/K})\|_{L^1(\partial \Omega)} .
\end{equation}
This is analogous to \eqref{eq5.11a}. As $K\gg1$ is large enough such that $C_\varepsilon K^{-1}\le 1/2$ (the same $K$ as chosen before will work here since the implied constants for $w$ and $u$ are comparable), we get that
\begin{equation} \label{u-est3-final}
\|\wt N(\nabla u\1_{t<2/K})\|_{L^1(\partial \Omega)} \leq 4NC_K.
\end{equation}
We are nearly done. Observe that we have that $\partial_\nu^Au=0$ on $\partial\mathcal O\times (1/K,\infty)$.
It is therefore natural to use Lemma \ref{lemma:Exponential Decay} to estimate
$\|\wt N(\nabla u\1_{t\ge 2/K})\|_{L^1(\partial \Omega)} $. By \eqref{u-est3-final} we do have a good control 
of $\nabla u$ on the set $\mathcal O\times (1/K,2/K)$ where the Neumann data vanish. The difference is that 
this is a set of size $1/K$ in time (instead of size $1$). However, it is not hard to see that by combining Caccioppoli's inequality, boundary H\"older regularity of solutions and the maximum principle we have that
$$\|\wt N(\nabla u\1_{1/K<t<1/K+1})\|_{L^1(\partial \Omega)}\lesssim_K\|\wt N(\nabla u\1_{1/K<t<2/K})\|_{L^1(\partial \Omega)},$$
and therefore after using Lemma \ref{lemma:Exponential Decay}  we finally obtain the desired estimate
\begin{equation} \label{u-est4}
\|\wt N(\nabla u)\|_{L^1(\partial \Omega)} \leq \tilde C_K.
\end{equation}
This concludes our argument for $r>0$ small. Hence Theorem \ref{MT} holds.\qed

\bibliographystyle{alpha}
%\bibliography{reference}

\begin{thebibliography}{30}

\bibitem{Aro68} D. Aronson, {\em Non-negative solutions of linear parabolic equations}, Ann. Scuola Norm. Sup. Pisa Cl. Sci. (3) 22 (1968), 607--694.

\bibitem{AEN} P. Auscher, M.  Egert, K. Nystr\"om, {\em
$L^2$ well-posedness of boundary value problems for parabolic systems with measurable coefficients}, 
 J. Eur. Math. Soc. 22 (2020), no. 9, 2943--3058.

\bibitem{B} R. Brown, {\em The initial-Neumann problem for the heat equation in Lipschitz cylinders},
Trans. Amer. Math. Soc. 320 (1990), no. 1, 1--52.

\bibitem{Din23} M. Dindo\v{s}, {\em On the regularity problem for parabolic operators and the role of half time derivative}, J. Geom. Anal. 35 (2025), no. 5, Paper No. 154, 25 pp.

\bibitem{DiS} M. Dindo\v{s}, E. Nystr\"om, {\em A relation between the Dirichlet  and the Regularity problem for Parabolic equations}, J. Differential Equations 455 (2026), Paper No. 113962, 58 pp.

\bibitem{DLP2} M. Dindo\v{s}, L. Li, J. Pipher, {\em The $L^p$ Neumann problem for parabolic operators with coefficients satisfying small Carleson condition}, preprint arXiv:2606.09614 (2026).

\bibitem{DLP3} M. Dindo\v{s}, L. Li, J. Pipher, {\em Localization and interpolation of parabolic $L^p$ Neumann problems}, arXiv:2601.12429 (2026). To appear in Recent Advances in Harmonic Analysis and Partial
Differential Equations, in the book series {\it Applied and Numerical Harmonic Analysis},  Springer.

\bibitem{FL} J. Feneuil, L. Li, {\em The $L^p$ Poisson-Neumann problem and its relation to the Neumann problem}, preprint arXiv:2406.16735 (2024).

\bibitem{Ke94} C. E. Kenig, {\em Harmonic analysis techniques for second order elliptic boundary value problems}, CBMS Regional Conference Series in Mathematics, vol. 83, American Mathematical Society, Providence, RI, 1994.

\bibitem{KP93} C. Kenig, J. Pipher, {\em The Neumann problem for elliptic equations with nonsmooth coefficients}. 
Invent. Math. 113 (1993), no. 3, 447–509.

\bibitem{Lieberman} G. M. Lieberman, {\em Second Order Parabolic Differential Equations}, World Scientific Publishing Co., Inc., River Edge, NJ, 1996.

\bibitem{Ny2016} K. Nystr\"om,
{\em Square function estimates and the Kato problem for second order parabolic operators in $\mathbb R^{n+1}$}, Adv. in Math. 293 (2016), 1-36.

\end{thebibliography}

\end{document}